\newtheorem{theorem}{Theorem}
\newcommand{\change}[1]{{#1}}
\begin{document}

\title{A study on conserving invariants of the Vlasov equation in semi-Lagrangian computer simulations}
\shorttitle{Conservation of invariants of the Vlasov equation}
\author{L. Einkemmer\corresp{\email{lukas.einkemmer@uibk.ac.at}}}
\affiliation{University of Innsbruck, Innsbruck, Austria}
\maketitle
\begin{abstract}
The semi-Lagrangian discontinuous Galerkin method, coupled with a splitting approach in time, has recently been introduced for the Vlasov--Poisson equation. Since these methods are conservative, local in space, and able to limit numerical diffusion, they are considered a promising alternative to more traditional semi-Lagrangian schemes.
In this paper we study the conservation of important physical invariants and the long time behavior of the semi-Lagrangian discontinuous Galerkin method. To that end we conduct a theoretical analysis and perform a number of numerical simulations. In particular, we find that the entropy is nondecreasing for the discontinuous Galerkin scheme, while unphysical oscillations in the entropy are observed for the traditional method based on cubic spline interpolation. 
\end{abstract}

\section{Introduction}

The so-called semi-Lagrangian methods constitute a class of numerical
schemes used to discretize hyperbolic partial differential equations
(usually first order equations). The basic idea is to follow the characteristics
backward in time. For the Vlasov--Poisson equation the characteristics
corresponding to the splitting sub-steps can be determined analytically
(as has been suggested in the seminal paper of \citet{cheng1976}).
However, since the endpoint of a characteristic curve does not necessarily
coincide with the grid used, an interpolation procedure has to be
employed. An obvious choice is to reconstruct the desired function
by spline interpolation, which according to \citet{sonnendruecker2011}
is still considered the de facto standard in Vlasov simulations. A
downside of this approach is that a tridiagonal linear system of equations
has to be solved to construct the spline. This algorithm has a low
flop/byte ratio and significant communication overhead (both of which
are unfavorable on most modern and future high performance computing
systems).

On the other hand, the semi-Lagrangian discontinuous Galerkin method
employs a piecewise polynomial approximation in each cell of the computational
domain \citep{qiu2011,rossmanith2011,crouseilles2011,einkemmer2014}.
In case of the advection equation the discretized function is translated
and then projected back to the appropriate subspace of piecewise polynomial
functions. This method, per construction, is mass conservative and
only accesses two adjacent cells in order to compute the necessary
projection (this is true independent of the order of the approximation).
Furthermore, from the literature available it seems that the semi-Lagrangian
discontinuous Galerkin method compares favorable with spline interpolation.
In addition, mathematical rigorous convergence results are available
\citep{einkemmer2014convergence,einkemmer2014}.

\change{A further method that has been widely employed in plasma simulations is the
van Leer scheme (see, for example, \cite{fijalkow1999, mangeney2002, galeotti2005, valentini2005, califano2006, galeotti2006}).
This case is formally similar to the discontinuous Galerkin scheme employed in this paper. What
distinguishes the discontinuous Galerkin method from the van Leer scheme is that the coefficients in the corresponding basis
expansion are stored directly in computer memory. In contrast, the van Leer scheme replaces these coefficients 
by performing an approximation using suitable differences on an equidistant grid.}

The semi-Lagrangian discontinuous Galerkin scheme is fully explicit
(i.e.~no linear system has to be solved to advance the solution in
time) and thus it is easier to implement (especially on parallel architectures)
and shows a more favorable communication pattern. We should note that
some measures have been taken to improve the parallel scalability
of the cubic spline interpolation \citep{crouseilles2009}.
However, even for this approach a relatively large communication overhead
is incurred. This is due to the fact that the boundary condition for
the local spline reconstruction requires a large stencil if the desirable
properties of the global cubic spline interpolation are to be preserved
(the method derived in \citet{crouseilles2009} requires a centered
stencil of size $21$). 

It is well known that the Vlasov--Poisson equation conserves a number
of physically important variables. In many applications it is important
that these quantities are conserved by the discretization (at least
up to some tolerance; ideally up to machine precision) for long time
intervals. For the cubic spline interpolation, which is employed in
a number of software packages that are used to simulate the Vlasov
equation, the good long time behavior is well established (even though
there are no rigorous results). \change{Furthermore, the conservation of invariants
has been studied extensively for the (mostly second and third order) van Leer schemes.
In \cite{mangeney2002} the diffusive and dispersive error has been investigated and compared
to spline interpolation. In \cite{galeotti2005} and \cite{califano2006} particular emphasis
has been placed on the diffusive behavior of the van Leer schemes and spline interpolation
(which is of paramount importance for collisionless plasma simulations).}

While some short time numerical results with respect to the conservation
properties are available for the discontinuous Galerkin semi-Lagrangian
method \citep{qiu2011,crouseilles2011}, to the
knowledge of the author no systematic study has been performed. Thus,
the purpose of this paper is to investigate the performance of the
semi-Lagrangian discontinuous Galerkin scheme with respect to the
conservation of a number of physically important variables. We will
also present long time integration results which so far have, to our
knowledge, been absent from the literature. In addition, we investigate
what effect the degree of the polynomial approximation has in this
context. This is particular important for the discontinuous Galerkin
approach as choosing a higher order approximation reduces the number
of cells available in the numerical simulation.

In section \ref{sec:eq-numerical-methods} we introduce the Vlasov--Poisson
equation, the splitting approach used for the time discretization,
and both the semi-Lagrangian discontinuous Galerkin method as well
as the semi-Lagrangian method based on the cubic spline interpolation.
In section \ref{sec:conservation} we analyze the discontinuous Galerkin
approach from a theoretical point of view and compare it to the cubic
spline interpolation. This allows us to better understand the numerical
results that have been obtained in section \ref{sec:numer-results}.
In this section we provide results that compare the discontinuous
Galerkin approach with the cubic spline interpolation and perform
long time simulations in order to show the viability of the semi-Lagrangian
discontinuous Galerkin scheme for the present application. Finally,
we conclude in section \ref{sec:Conclusion}.

\section{Equations and numerical methods\label{sec:eq-numerical-methods}}

In astro- and plasma physics the behavior of a collisionless plasma
is modeled by the Vlasov equation
\[
\partial_{t}f(t,x,v)+v\cdot\nabla f(t,x,v)+F\cdot\nabla_{v}f(t,x,v)=0,
\]
which is posed in an up to $3+3$ dimensional phase space (although
lower dimensional models are often employed as well). The variable
$x$ denotes the position and variable $v$ denotes the velocity.
The density function $f$ is the sought-after particle distribution
and the (force) term $F$ describes the interaction of the plasma
with the electromagnetic field. Depending on the application this
force term can include the full Lorentz force or only the force due
to the electric field.

In this paper we will restrict ourselves to the Vlasov--Poisson equation,
i.e. the force term is given by $F=E$ and the electric field is determined
by solving 
\[
\nabla\cdot E=\rho(x)-1,\qquad\nabla\times E=0
\]
with charge density
\[
\rho(x)=\int f(t,x,v)\,\mathrm{d}v.
\]
In addition, in all our simulations we impose periodic boundary conditions
in both the $x$- and $v$-directions.

To advance the numerical solution in time we use the splitting approach
introduced in \citet{cheng1976}. That is, we consider (the first part
of the splitting algorithm)
\begin{equation}
\partial_{t}f(t,x,v)+v\cdot\nabla f(t,x,v)=0,\qquad f(0,x,v)=g(x,v)\label{eq:splitting-part-1}
\end{equation}
and denote the corresponding solution at time $\tau$ by 
\[
\mathrm{e}^{\tau A}g(x,v)=f(\tau,x,v).
\]
For the second part of the splitting algorithm we then consider
\begin{equation}
\partial_{t}f(t,x,v)+E^{n}\cdot\nabla_{v}f(t,x,v)=0,\qquad f(0,x,v)=g(x,v),\label{eq:splitting-part-2}
\end{equation}
where the electric field $E^{n}$ is determined from the density 
\[
\rho^{n}(x)=\int g(x,v)\,\mathrm{d}v
\]
and is thus taken constant during each time step. We denote the corresponding
solution by 
\[
\mathrm{e}^{\tau B}g(x,v)=f(\tau,x,v).
\]
Note that, contrary to what the notation suggests, $B$ is a nonlinear
operator as the electric field depends on the density function $f$.
Using the introduced notation we can easily formulate a time step
of the splitting algorithm. The Lie splitting
\[
f^{n+1}(x,v)=\mathrm{e}^{\tau B}\mathrm{e}^{\tau A}f^{n}(x,v),
\]
where $f^{n}$ is an approximation of $f(t_{n},x,v)$ and $\tau=t_{n+1}-t_{n}$
is the time step size, is a first order method. In all our simulation
we will use the Strang splitting scheme
\[
f^{n+1}(x,v)=\mathrm{e}^{\frac{\tau}{2}A}\mathrm{e}^{\tau B}\mathrm{e}^{\frac{\tau}{2}A}f^{n}(x,v)
\]
which is a second order method and has roughly the same computational
cost as the Lie splitting scheme. Let us also note that high order
splitting methods have been constructed \citep{crouseilles2011high}. 

The main computational advantage of the splitting scheme is that the
resulting sub-steps, given by (\ref{eq:splitting-part-1}) and (\ref{eq:splitting-part-2}),
are in the form of an advection equation, where the advection speed
is independent of the variable in the direction of the advection.
Thus, we can immediately solve (\ref{eq:splitting-part-1}) to obtain
\begin{equation}
\mathrm{e}^{\tau A}g(x,v)=g(x-\tau v,v)\label{eq:translation-1}
\end{equation}
and (\ref{eq:splitting-part-2}) to obtain
\begin{equation}
\mathrm{e}^{\tau B}g(x,v)=g(x,v-\tau E(x)),\label{eq:translation-2}
\end{equation}
where $E$ is the electric field determined from the charge density
corresponding to $g(x,v)$. Thus, by using the splitting approach
outlined above we have reduced the task of computing a numerical approximation
to the Vlasov--Poisson equation to computing two translations in phase
space.

Now, up to this point phase space is still continuous. However, in
order to implement the numerical scheme on a computer, we have to
perform a space discretization that represents the numerical solution,
at a fixed time step, using a finite number of degrees of freedom.
The most straightforward approach is to choose a uniform grid. However,
for any grid $(x_{i},v_{j})$ the translations $x_{i}-\tau v_{j}$
and $v_{j}-\tau E(x_{i})$, in general, do not coincide with a grid
point. Thus, we have to use an interpolation scheme in order to evaluate
$g(x_{i}-\tau v_{j})$ and $g(v_{j}-\tau E(x_{i}))$. In the Vlasov
community interpolation based on cubic splines is very popular. This
method is illustrated in figure \ref{fig:pol-interp} and proceeds
in two steps:
\begin{enumerate}
\item construct a cubic spline using the known value of the function on
the grid points (this involves the solution of a tridiagonal system
of equations);
\item translate each grid point according to (\ref{eq:translation-1}) or
(\ref{eq:translation-2}) and use the cubic spline to obtain the new
value at that grid point.
\end{enumerate}
Since we follow the characteristics backward in time (i.e.~track
particles along their trajectories) but use an Eulerian grid for the
space discretization, these methods are usually referred to as semi-Lagrangian
and are free of a Courant--Friedrichs--Lewy (CFL) condition. In addition,
they do not exhibit the numerical noise that is prevalent in particle
methods (such as the popular particle in cell scheme). 

\begin{figure}
\begin{centering}
\includegraphics[width=8cm]{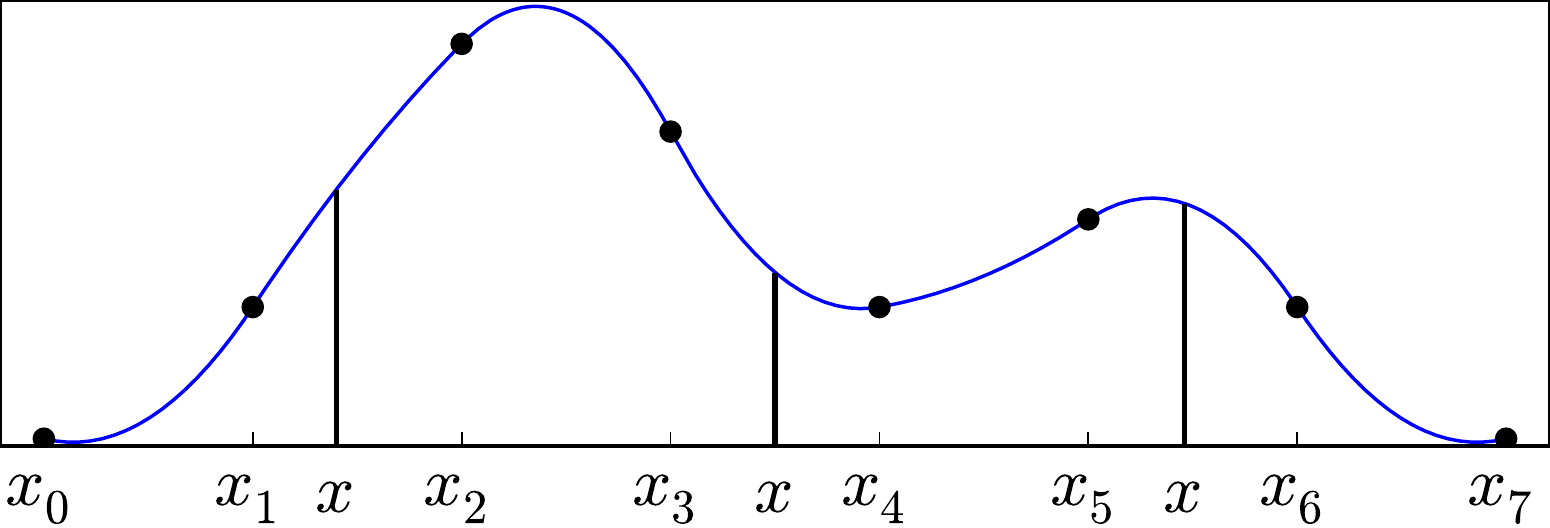}
\par\end{centering}

\caption{An illustration of the semi-Lagrangian method based on a quadratic
spline interpolation is shown. In this case a continuous piecewise
polynomial interpolant is constructed that is used for all evaluations.
We illustrate three evaluations of the interpolant (denoted by the
variable $x$). \label{fig:pol-interp}}
\end{figure}

For the Vlasov--Poisson equation it is vital that charge (or mass;
the two quantities are equivalent as all particles in the physical
model carry the same amount of electrical charge) is conserved. It
is, a priori, not clear that the a semi-Lagrangian scheme based on
spline interpolation satisfies this constraint. In order to show this,
we reformulate the numerical method as a forward semi-Lagrangian scheme
(as is done in \citet{crouseilles2009forward}). Since our interpolant
is a cubic spline, we can expand $f^{n}(x,v)$ in the B-spline basis
(for simplicity we restrict ourselves to the 1+1 dimensional case
here)
\[
f^{n}(x,v)=\sum_{kl}\omega_{kl}^{n}S(x-x_{k})S(v-v_{l}),
\]
where $S$ is given by
\[
6S(x)=\begin{cases}
4-6(x/h)^{2}+3\vert x/h\vert^{3} & 0\leq\vert x\vert\leq h\\
(2-\vert x/h\vert)^{3} & h\leq\vert x\vert\leq2h\\
0 & \text{otherwise}
\end{cases}
\]
and $h$ is the grid spacing. The coefficients $\omega_{kl}^{n}$
are determined uniquely by the function values at the grid points
but in order to obtain them we have to solve a linear system of equations.
The forward semi-Lagrangian method based on cubic spline interpolation
for (\ref{eq:translation-1}) is then given by
\[
f^{n+1}(x_{i},v_{j})=\sum_{kl}\omega_{kl}^{n}S(x_{i}-\tau v_{j}-x_{k})S(v_{j}-v_{l}).
\]
For the constant advection case considered here this is merely a reformulation
of the previously introduced (backward) semi-Lagrangian method. However,
the interpretation is different (translating the basis function vs
following the characteristics backward in time). That the forward
semi-Lagrangian scheme is mass conservative can be easily deduced
from the present formulation. First, we note that the B-splines form
a partition of unity. That is, for all $x$ we have
\[
\sum_{k}S(x-x_{k})=1.
\]
Therefore, we can write the integral over a spline $u$ as follows
\[
\int u(x)\,\mathrm{d}x=\sum_{k}\omega_{k}\int S(x-x_{k})\,\mathrm{d}x=h\sum_{k}\omega_{k}=h\sum_{i}\sum_{k}\omega_{k}S(x_{i}-x_{k})=h\sum_{i}u(x_{i}),
\]
which shows that the discrete charge is in fact equal to the continuous
charge given by the cubic spline interpolation. Finally, we have\textbf{
\[
\sum_{i}u(x_{i}-\tau v)=\sum_{i}\sum_{k}\omega_{k}S(x_{i}-\tau v-x_{k})=\sum_{k}\omega_{k}\sum_{i}S(x_{i}-\tau v-x_{k})=\sum_{k}\omega_{k}=\sum_{i}u(x_{i})
\]
}which shows that the semi-Lagrangian scheme based on the cubic spline
interpolation is charge conservative (up to machine precision). We
further remark that, assuming a sufficient regular solution, this
method is fourth order accurate in space.

We now turn our attention to the semi-Lagrangian discontinuous Galerkin
scheme. To that end we divide our domain into cells $C_{i}=I_{i}^{(1)}\times\dots\times I_{i}^{(d)}$,
where the $I_{i}$ are one-dimensional intervals of length $h$, and
assume that a function $u$ is given such that $u\vert_{C_{i}}$,
i.e. the restriction of $u$ to the $i$th cell, is a polynomial of
degree $k$. Then the function $u$ lies in the approximation space
(note that we do not enforce a continuity constraint across cell interfaces).
However, in general, this is not true for the translated function
\[
T_{\tau}u(x)=u(x-\tau v),
\]
where we have introduced the translation operator $T_{\tau}$. Thus,
we perform an approximation by applying a projection operator $P$
to obtain
\[
u^{n+1}=PT_{t}u^{n}.
\]
Note that except for this projection no approximation has been made
so far. The function $u^{n+1}$ constitutes the sought-after approximation
of $u^{n}(x-\tau v)$. The operator $P$ is the $L^{2}$ projection
on the (finite dimensional) subspace of cell-wise polynomials of degree
$\ell$. This approach is illustrated in figure \ref{fig:illustration-dg}.

\begin{figure}
\centering{}\includegraphics[width=6cm]{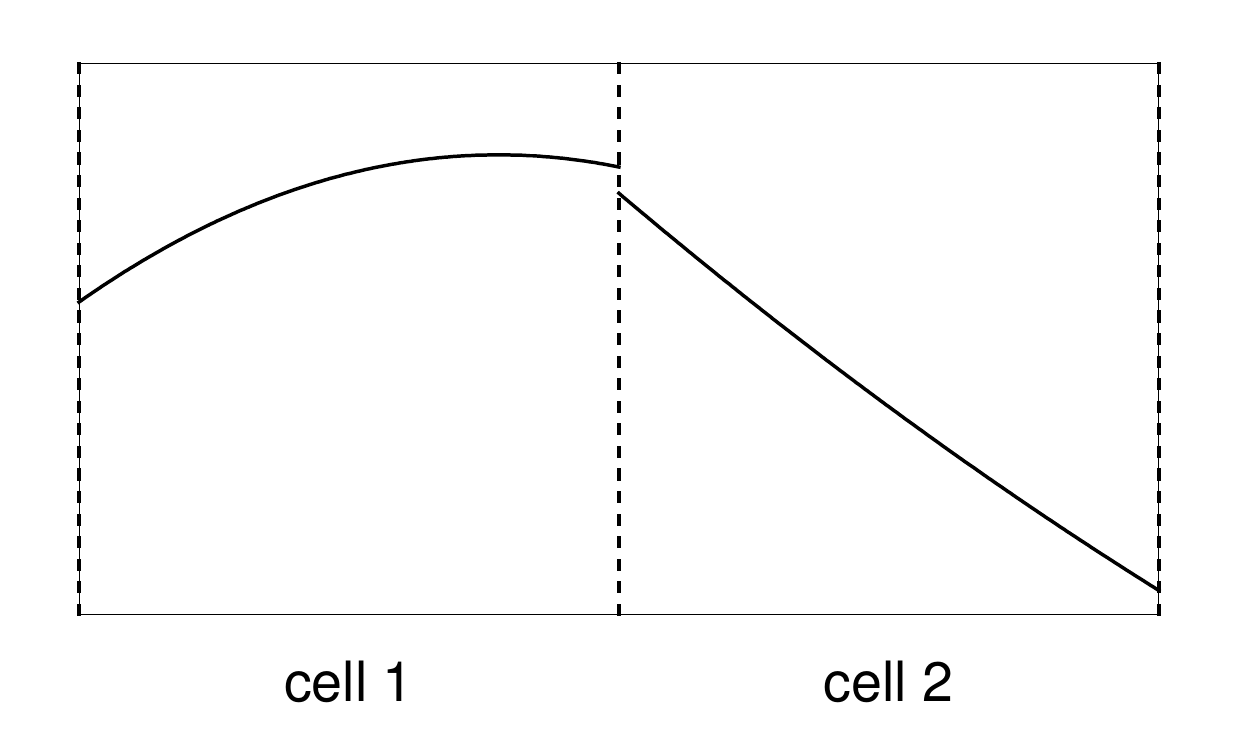}\includegraphics[width=6cm]{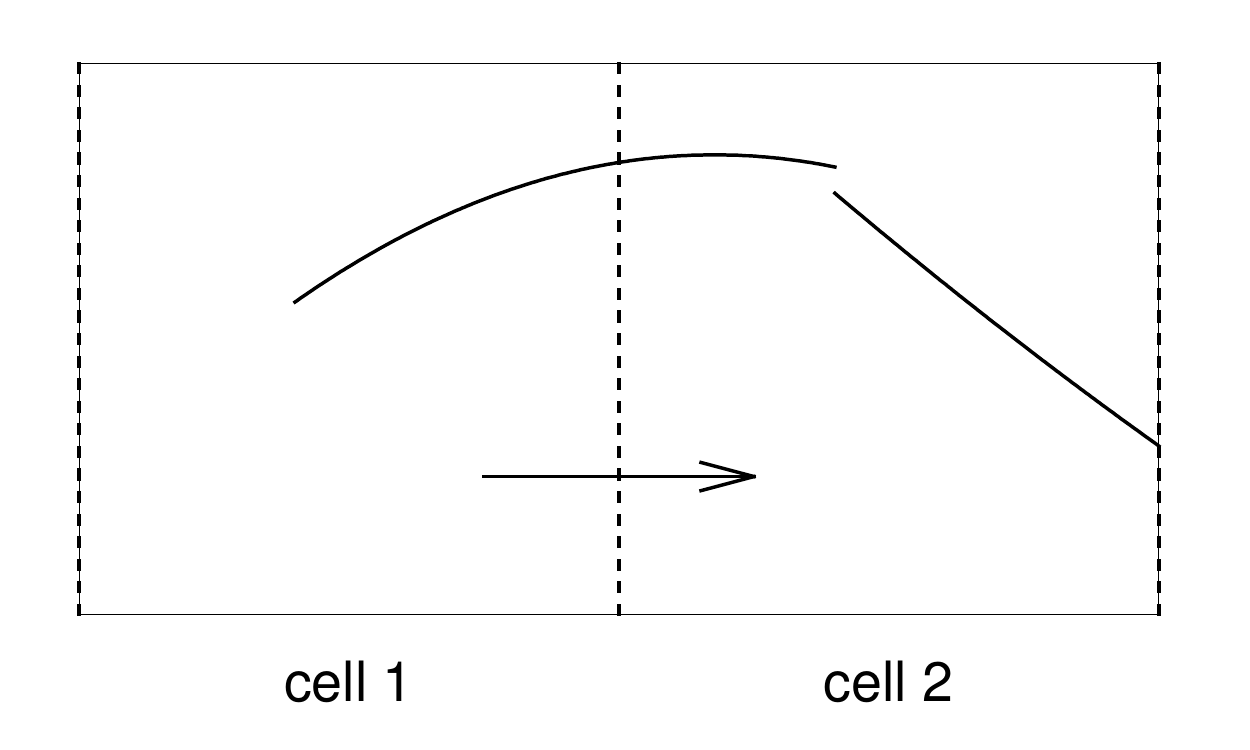}\caption{Illustration of the semi-Lagrangian discontinuous Galerkin approximation
in a single space dimension. The discontinuous piecewise polynomial
approximation is translated in space and then projected back to the
finite dimensional subspace of piecewise polynomial functions. \label{fig:illustration-dg}}
\end{figure}
In order to implement the numerical scheme, we still have to choose
a basis for the space of cell-wise polynomials of degree $\ell$.
This then also determines the degrees of freedom stored in computer
memory. Our implementation is based on the Lagrange basis at the Gauss--Legendre
quadrature nodes. Thus, the degrees of freedom correspond to the value
of the function at the Gauss--Legendre nodes in each cell. However,
in certain situations, it is more convenient to express the piecewise
polynomial function in the Legendre basis. Thus, we can write (for
an $L^{2}$ function $u$) 
\[
u(x)\approx Pu(x)=\sum_{i=0}^{N-1}\sum_{j=0}^{\ell}u_{ij}p_{ij}(x),
\]
where $p_{ij}$ is the (appropriately scaled) $j$th Legendre polynomials
in the $i$th cell and $N$ is the number of cells. The coefficients
in the expansion are then given by
\[
u_{ij}=\int u(x)p_{ij}(x)\,\mathrm{d}x.
\]
Since the Legendre polynomials form an orthogonal basis, we have
\[
\int u(x)\,\mathrm{d}x=\sum_{i}u_{i0}=\int Pu(x)\,\mathrm{d}x
\]
and thus we follow that the numerical scheme is charge conservative.

Before proceeding, let us make two remarks. First, the semi-Lagrangian
discontinuous Galerkin method is fully explicit. That is, no linear
system of equations has to be solved. Second, the order of the space
discretization is $p+1$ and thus we can easily construct numerical
schemes of arbitrary order. Note, however, that the degrees of freedom
are directly proportional to the order of the method. We will revisit
this issue in some detail in section \ref{sec:numer-results}. 

Up to this point we have described the numerical schemes used to solve
the Vlasov equation. However, what we have not discussed so far is
how to solve Poisson's equation in order to determine the electric
field from the charge density. Since the electric field is usually
a smooth function, we can use the fast Fourier transformation (FFT).
This is straightforward for the cubic spline interpolation as the
degrees of freedom already reside on an equidistant grid. However,
for the discontinuous Galerkin approach the degrees of freedom are
located at the Gauss--Legendre quadrature nodes and thus have to be
transferred to an equidistant grid before an FFT algorithm can be
applied. In our implementation we simply evaluate the piecewise polynomial
in order to obtain an equidistant grid. For example, for a piecewise
linear approximation and the cell $[x_{i-1/2},x_{i-1/2}+h]$ we evaluate
the approximant at $x_{i-1/2}+\tfrac{1}{3}h$ and $x_{i-1/2}+\tfrac{2}{3}h$
which yields an equidistant grid. In addition, once the electric field
has been computed it has to be transferred back from an equidistant
grid to the Gauss--Legendre quadrature nodes. In our implementation
we use a polynomial interpolation that is of the same order as the
discontinuous Galerkin scheme. Since this part of the algorithm takes
place in a lower dimensional space, we could, in principle, use a
more elaborate interpolation scheme without significantly increasing
the computational cost. In sections \ref{sec:conservation} and \ref{sec:numer-results}
we will further investigate the effect of this interpolation on the
conservation properties of the numerical scheme.

\section{Conservation \label{sec:conservation}}

The purpose of this section is to analyze the conservation properties
from a theoretical point of view. The conserved variables (for the
continuous system) which we will consider here are: charge, $L^{1}$
norm of the density function, current, energy, entropy, and the $L^{2}$
norm of the density function. We will first consider the error that
is made due to the time discretization (the splitting approach outlined
in the introduction) and then consider the error made due to the discontinuous
Galerkin or the cubic spline interpolation that are used to discretize
space. An overview of the properties of each scheme is given in table
\ref{tab:conservation}.

\begin{table}
\begin{centering}
\begin{tabular}{ccccc}
\hline 
conserved var. & splitting & dG & spline & remarks\tabularnewline
\hline 
charge & yes & yes & yes & \tabularnewline
current & yes & $\text{yes}^{\dagger}$ & $\text{yes}^{\dagger}$ & additional error term for $\ell=0$\tabularnewline
energy & no & no & no & additional error term for $\ell=0$, $\ell=1$\tabularnewline
entropy & yes & no & no & \tabularnewline
$L^{1}$ & yes & $\text{no}^{\ast}$ & no & \tabularnewline
$L^{2}$ & yes & no & no & \tabularnewline
\hline 
\end{tabular}
\par\end{centering}

\caption{This table provides an overview of the conserved quantities for the
splitting approach in time and the two space discretization strategies.
The superscript $\dagger$ means the the conservation is only true
up to an error made in the computation of the electric field. The
superscript $\ast$ emphasizes that positivity preserving semi-Lagrangian
discontinuous Galerkin schemes have been developed that do preserve
the $L^{1}$ norm as well as the mass. The degree of the polynomial
in each cell (for the sldg scheme) is denoted by $\ell$ and the corresponding
numerical method is consistent of order $\ell+1$ in space. \label{tab:conservation}}
\end{table}

\subsection{Time discretization}

\subsubsection*{Charge and $L^{p}$ norms}

Since we can write both parts of the splitting as a translation along
a coordinate axis in phase space, we immediately conclude that the
splitting conserves the mass as well as all $L^{p}$ norms.

\subsubsection*{Current}

More interesting is the conservation of the (total) current. The current
is defined by the following relation
\[
j=\int vg(x,v)\,\mathrm{d}(x,v).
\]
For the first part of the splitting (see (\ref{eq:translation-1}))
we immediately have
\[
\int vg(x-vt,v)\,\mathrm{d}(x,v)=\int vg(x,v)\,\mathrm{d}(x,v)
\]
which is the desired conservation of the current. For the second part
of the splitting (see (\ref{eq:translation-2})) we get
\begin{align*}
\int vg(x,v-E(x)t)\,\mathrm{d}(x,v) & =\int(v+E(x)t)g(x,v)\,\mathrm{d}(x,v)\\
 & =\int vg(x,v)\,\mathrm{d}(x,v)+t\int E(x)\rho(x)\,\mathrm{d}x.
\end{align*}
Then from 
\begin{align*}
\int E\rho\,\mathrm{d}x & =\int E(\nabla\cdot E)\,\mathrm{d}x\\
 & =\int E(\nabla\cdot E)+\tfrac{1}{2}\nabla E^{2}\,\mathrm{d}x\\
 & =-\int(\nabla\times E)\times E\,\mathrm{d}x\\
 & =0
\end{align*}
the conservation of the current follows immediately. Note that we
have assumed here that the electric field is computed exactly and
that it is curl free. Both of these assumptions are obviously satisfied
if we neglect the space discretization. However, we will have to consider
these assumptions in more detail in the next section.

\subsubsection*{Energy}

The total energy is given by
\[
E=\int\tfrac{1}{2}v^{2}g(x,v)\,\mathrm{d}(x,v)+\int\tfrac{1}{2}E^{2}(x)\,\mathrm{d}x,
\]
where the first term represents the kinetic energy and the second
term represents the energy stored in the electric field. The first
part of the splitting conserves the kinetic energy
\[
\int v^{2}g(x-tv,v)\,\mathrm{d}(x,v)=\int v^{2}g(x,v)\,\mathrm{d}(x,v)
\]
and the second part of the splitting conserves the electric energy
(since the charge density is not changed). However, in both cases
the reverse statement is not true. Thus, the second part of the splitting
does not conserve the electric energy and the first part of the splitting
does not conserve the kinetic energy. Most importantly, the complete
splitting procedure does not conserve the total energy. 

This is to be expected. The splitting scheme considered here is a
Hamiltonian splitting (see, for example, \citet{casas2015}). Such
numerical methods have been studied extensively in the case of ordinary
differential equations (see, for example, \citet{hairer2006}). While
these numerical methods do not conserve energy, they show excellent
long time properties with respect to energy conservation. The results
for ordinary differential equations can not be directly transferred
to the present case. The hope is, however, that similar long time
properties can also be observed for the Vlasov--Poisson equation.

\subsubsection*{Entropy}

The entropy is defined as
\begin{equation}
S=-\int g\log g\,\mathrm{d}(x,v).\label{eq:entropy}
\end{equation}
For physically reasonable initial values (i.e.~$g\geq0$) the entropy
is well defined and conserved by the continuous system. Entropy is
maximized for a density function which has equal probability for each
point in phase space. Since diffusive systems naturally tend to this
limit, we use entropy as a measure of the amount of diffusion that
is introduced by the numerical scheme. 

To analyze the splitting scheme, we define $f(t,x,v)=g(x-tv,v)$.
Then, for the first part of the splitting algorithm, we have
\begin{align*}
\partial_{t}\int f\log f\,\mathrm{d}(x,v) & =-\int(\log f)\nabla\cdot(vf)\,\mathrm{d}(x,v)-\int\nabla\cdot(vf)\,\mathrm{d}(x,v).
\end{align*}
Integration by parts yields 
\[
\int(\log f)\nabla\cdot(vf)\,\mathrm{d}(x,v)=-\int fv\cdot\nabla\log f\,\mathrm{d}(x,v)=-\int\nabla\cdot(vf)\,\mathrm{d}(x,v)=0
\]
and thus entropy is conserved for the first part of the splitting
algorithm. 

Now, for the second step in the splitting algorithm we define $f(t,x,v)=g(x,v-tE(x))$
and get
\[
\partial_{t}\int f\log f\,\mathrm{d}(x,v)=-\int(\log f)\nabla_{v}\cdot(Ef)\,\mathrm{d}(x,v)-\int\nabla_{v}\cdot(Ef)\,\mathrm{d}(x,v)=0
\]
which shows that entropy is conserved for the splitting scheme.

\subsection{Space discretization}

\subsubsection*{Charge and $L^{1}$ norm}

The discontinuous Galerkin method is charge conservative (see section
\ref{sec:eq-numerical-methods}). The same is true for the spline
interpolation. However, both numerical methods can produce negative
values. This, in principle, is not an issue for the algorithm considered
here. The only difficulty that might arise is the physical interpretation
of the resulting particle density function. Note, however, that as
long as the magnitude of the negative values are smaller than the
accuracy required, this issue is essentially void. However, an interesting
aspect is that with respect to conservation of the $L^{1}$ norm we
can formulate the following result. 
\begin{theorem}
A numerical algorithm that produces negative values and is mass conservative
can not conserve the $L^{1}$ norm.\end{theorem}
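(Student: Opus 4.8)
The plan is to exploit the elementary identity that relates the $L^1$ norm, the mass, and the negative part of a function. Writing $f_- = \max(-f,0)$ for the negative part, one has the pointwise identity $|f| = f + 2f_-$, and hence, upon integrating over phase space,
\[
\|f\|_{L^1} = \int |f|\,\mathrm{d}(x,v) = \int f\,\mathrm{d}(x,v) + 2\int f_-\,\mathrm{d}(x,v),
\]
i.e.\ the $L^1$ norm equals the mass plus twice the $L^1$ mass of the region on which $f<0$. This decomposition is the whole content of the proof.

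I would then argue by contradiction. Suppose the scheme is both mass conservative and $L^1$ conserving, and run it from physically reasonable initial data $f^0\ge 0$, so that $\|f^0\|_{L^1} = \int f^0\,\mathrm{d}(x,v)$. That the scheme ``produces negative values'' means there is such a run and a time step $n$ at which $f^n<0$ on a set of positive measure, hence $\int f^n_-\,\mathrm{d}(x,v)>0$. Applying the identity above to $f^n$ and using mass conservation ($\int f^n\,\mathrm{d}(x,v) = \int f^0\,\mathrm{d}(x,v)$) gives
\[
\|f^n\|_{L^1} = \int f^0\,\mathrm{d}(x,v) + 2\int f^n_-\,\mathrm{d}(x,v) = \|f^0\|_{L^1} + 2\int f^n_-\,\mathrm{d}(x,v) > \|f^0\|_{L^1},
\]
contradicting the assumed conservation of the $L^1$ norm. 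Equivalently, in contrapositive form: a mass-conservative scheme that conserves the $L^1$ norm must be positivity preserving, which is consistent with the remark $^{\ast}$ in table~\ref{tab:conservation}.

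There is no serious technical obstacle here; the computation is immediate once the splitting $\|f\|_{L^1} = (\text{mass}) + 2\|f_-\|_{L^1}$ is written down. The only point that needs care is the precise reading of the hypotheses. ``Produces negative values'' has to be understood as: there exist (nonnegative) initial data for which some iterate is negative on a set of positive measure — negativity confined to a null set carries no $L^1$ penalty and would violate nothing. Likewise, ``conserves the $L^1$ norm'' must be interpreted relative to the initial datum of the run, which is exactly why taking $f^0\ge 0$ is both natural and necessary for the argument.
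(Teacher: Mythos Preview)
Your proof is correct and follows essentially the same route as the paper: start from nonnegative initial data so that $\|f^0\|_{L^1}=\int f^0$, invoke mass conservation, and observe that once negative values appear the $L^1$ norm strictly exceeds the mass. The paper writes this as the single chain $\int|f^0|=\int f^0=\int f^n\le\int|f^n|$ with strict inequality when $f^n<0$ somewhere, whereas you make the gap explicit via $|f|=f+2f_-$; this is a cosmetic difference, and your remarks on ``positive measure'' sharpen a point the paper leaves implicit.
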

\begin{proof}
We assume that the initial value $f(0,x,v)$ is non-negative. Then
\[
\int\vert f(0,x,v)\vert\,\mathrm{d}(x,v)=\int f(0,x,v)\,\mathrm{d}(x,v)=\int f(t,x,v)\,\mathrm{d}(x,v)\leq\int\vert f(t,x,v)\vert\,\mathrm{d}(x,v).
\]
The inequality becomes strict if the particle density function $f(t,x,v)$
is negative. In this case the $L^{1}$ norm is not conserved which
is the desired result.
\end{proof}

The above result shows that conservation of $L^{1}$ norm can be used
as a measure of positivity preservation. Both the discontinuous Galerkin
scheme and the method based on cubic splines, at least in their most
basic formulation, can yield negative values. Thus, they do not preserve
the $L^{1}$ norm. For the discontinuous Galerkin scheme (mass conservative)
limiters are available that restrict the density function in such
a manner that negative values are avoided \citep{rossmanith2011,qiu2011}. The resulting schemes are then both mass and
$L^{1}$ conservative. One should note, however, that while the proposed
positivity limiters can be applied locally, associated to them is
a certain computational cost. What is even more important is that
these procedure might degrade the performance of the scheme with respect
to the other conserved quantities. For example, the limiters suggested
in \citet{rossmanith2011} and \citet{qiu2011} operate by shifting
and scaling the output from the translation in order to avoid negative
values. However, in order to conserve mass, a positive shift must
result in a compression of the polynomial function which in turn introduces
additional diffusion into the numerical scheme. 

On the other hand, for the cubic spline interpolation implementing
a positivity limiter is significantly more complicated. Nevertheless,
a number of limiters have been suggested in the literature \citep{zerroukat2005,zerroukat2006,crouseilles2010}. It
has been well established in this case that adding positivity limiters
to the numerical scheme degrades the conservation of the other invariants.

We will not focus on positivity limiters in this paper. However, we
will investigate the appearance of negative values in some detail
in section \ref{sec:numer-results}. In fact, the numerical results
obtained seem to suggest that even without such modifications we observe
good long time behavior for the semi-Lagrangian discontinuous Galerkin
scheme.

\subsubsection*{Current}

In the following, we will use $g(x,v)=(Pf(\cdot-vt,v))(x)$. From
section \ref{sec:eq-numerical-methods} we know that both the discontinuous
Galerkin and the cubic spline based scheme are mass conservative and
thus
\[
\int g(x,v)\,\mathrm{d}x=\int f(x-vt,v)\,\mathrm{d}x.
\]
For the first part of the splitting algorithm we thus have
\[
\int vg(x,v)\,\mathrm{d}(x,v)=\int v\left(\int g(x,v)\,\mathrm{d}x\right)\mathrm{d}v=\int v\left(\int f(x-vt,v)\,\mathrm{d}x\right)\mathrm{d}v=\int vf(x,v)\mathrm{\,d}(x,y),
\]
which implies that the current is conserved for that step of the algorithm. 

The situation is more involved for the second part of the splitting
algorithm. This is due to the fact that the translation is now in
the velocity direction. We will thus consider the two numerical schemes
separately, starting with the discontinuous Galerkin method. To simplify
the notation let us define $g(x,v)=(Pf(x,\cdot-tE(x)))(v)$. Expanding
$v$ in the Legendre basis (in a given cell) yields
\[
v=v_{0}p_{0}+v_{1}p_{1}(v),
\]
where $p_{k}$ is the $k$th Legendre polynomial in the corresponding
cell. Thus, we get 
\begin{align*}
\int vg(x,v)\,\mathrm{d}(x,v) & =\int\left(\int(v_{0}p_{0}+v_{1}p_{1}(v))g(x,v)\,\mathrm{d}v\right)\mathrm{d}x\\
 & =\int\left[v_{0}p_{0}\int f(x,v-tE(x))\,\mathrm{d}v+v_{1}\int p_{1}(v)g(x,v)\,\mathrm{d}v\right]\,\mathrm{d}x.
\end{align*}
Now, since we can expand the function $v\mapsto g(x,v)$ into
\begin{equation}
g(x,v)=\sum_{k=0}^{\ell}g_{k}(x)p_{k}(v)\label{eq:expansion}
\end{equation}
we have (for $\ell\geq1$) 
\[
\int p_{1}(v)g(x,v)\,\mathrm{d}v=g_{1}(x)=\int p_{1}(v)f(x,v-tE(x))\,\mathrm{d}x
\]
and thus
\[
\int vg(x,v)\,\mathrm{d}(x,v)=\int vf(x,v-tE(x))\,\mathrm{d}(x,v).
\]
The only remaining issue is that we do not use the exact electric
field in our computation but replace it by a numerical approximation.
In principle, we can use an FFT based approach to compute the electric
field up to machine precision. However, the discontinuous Galerkin
approach requires an interpolation procedure in order to transfer
the data from and to the equidistant grid required to compute the
FFT. This transformation is not exact up to machine precision and
does introduce an error in the total current. We will investigate
this issue in section \ref{sec:numer-results}.

Now, let us consider the semi Lagrangian method based on the cubic
spline interpolation. That the first part of the splitting algorithm
conserves the current is an immediate consequence of the fact that
that scheme is mass conservative. For the second part of the splitting
algorithm we have 
\[
\sum_{j}v_{j}(Pf(x,\cdot-tE(x)))(v_{j})=\sum_{j}v_{j}f(x,v_{j}-tE(x))
\]
since the projection does not change the value of the spline at the
grid points. Thus, we follow that the cubic spline interpolation preserves
the current under the assumption that the electric field is determined
up to machine precision. Note that satisfying this assumption is easier
in the case of the spline based semi-Lagrangian scheme, as the degrees
of freedom already reside on an uniform grid and can thus be directly
fed into an FFT based Poisson solver.

\subsubsection*{Energy}

We have seen in the previous section that the splitting scheme does
not conserve the total energy. Thus, the question posed in this section
is if the space discretization introduces an additional error source.

First, let us consider the discontinuous Galerkin scheme. For the
first part of the splitting we define $g(x,v)=(Pf(\cdot-vt,x))(x)$
and obtain \change{(using conservation of mass)
\[
\int v^{2}g(x,v)\,\mathrm{d}(x,v)=\int v^{2}\left(\int g(x,v)\,\mathrm{d}x\right)\,\mathrm{d}v=\int v^{2}\left(\int f(x-vt,v)\,\mathrm{d}x\right)\,\mathrm{d}v.
\]
Now, we can rewrite the integrals on the right-hand side as follows
\[ \int v^{2}\left(\int f(x-vt,v)\,\mathrm{d}x\right)\,\mathrm{d}v
    = \int v^{2}f(x-vt,v)\,\mathrm{d}(x,v)
   \]
which shows that the kinetic energy is preserved.} On the other hand,
the first splitting step does not conserve the electric energy. Thus,
we consider here only if the space discretization modifies the electric
energy that we obtain from the time integrator. This is indeed the
case as in general 
\[
\int f(x-vt,v)\,\mathrm{d}v\neq\int g(x,v)\,\mathrm{d}v
\]
which shows that the space discretization modifies the charge density
and thus the electric field. Therefore, the space discretization introduces
an additional error source in the electric energy. 

For the second part of the splitting we define $g(x,v)=(Pf(x,\cdot-tE(x)))(v)$
and obtain in each cell (by using (\ref{eq:expansion})) 
\begin{align*}
\int v^{2}g(x,v)\,\mathrm{d}(x,v) & =\int\left(\sum_{k=0}^{2}\sum_{j=0}^{\ell}v_{k}g_{j}(x)\int p_{k}(v)p_{j}(v)\,\mathrm{d}v\right)\,\mathrm{d}x\\
 & =\int\sum_{k=0}^{\min(2,\ell)}v_{k}g_{k}\,\mathrm{d}x\\
 & =\int\left(\sum_{k=0}^{\min(2,\ell)}v_{k}\int p_{k}(v)f(x,v-tE(x))\,\mathrm{d}v\right)\,\mathrm{d}x\\
 & =\int v^{2}f(x,v-tE(x))\,\mathrm{d}(x,v).
\end{align*}
The last equality only holds for $\ell\geq2$ and shows that in this
case no additional error in the kinetic energy is introduced by the
space discretization. Now, the second part of the splitting does not
change the charge density and thus the electric energy remains invariant.
Since
\[
\rho(x)=\int f(x,v-tE(x))\,\mathrm{d}v=\int g(x,v)\,\mathrm{d}v,
\]
the same property holds true for the discontinuous Galerkin approximation
and thus the electric energy is conserved during that step (i.e.~no
additional error is introduced by the space discretization).

Most of the results obtained for the semi-Lagrangian discontinuous
Galerkin scheme can be transferred immediately to the case of spline
interpolation. Since the projection on the spline subspace does not
change the value of the approximant at the grid points, we have 
\[
\sum_{j}v_{j}^{2}g(x,v_{j})=\sum_{j}v_{j}^{2}f(x,v_{j}-tE(x))
\]
and thus the second part of the splitting does not introduce an additional
source of error (independent of the order of the underlying polynomials).

\subsubsection*{Entropy and $L^{2}$ norm}

For both the semi-Lagrangian discontinuous Galerkin scheme and the
cubic spline based scheme the entropy and the $L^{2}$ norm are not
conserved; even though both are conserved by the splitting algorithm.
We will evaluate the relative error made in the conservation of these
quantities in the next section. However, for now, we will discuss
one additional issue not directly related to conservation.

In the Vlasov--Poisson equations no diffusion mechanism is included.
Thus, the $L^{2}$ norm and the entropy are conserved for the continuous
model. However, once diffusion is added, the $L^{2}$ norm will decrease
in time. This is due to the fact that diffusion decreases the variance
of the density function $f$ and we can write $\Vert f\Vert_{2}^{2}=1+\text{Var}(f)$.
In addition, diffusion increases the entropy (which is a measure of
the disorder of a system). The latter property is, of course, just
a statement of the second law of thermodynamics. Thus, the decrease
in the $L^{2}$ norm and the increase in entropy can be considered
as a measure of the amount of diffusion in a given physical system.
In our case they are used to quantify the amount of diffusion that
a numerical scheme (wrongly) introduces. This point is interesting
because in the numerical simulations conducted in the next section
we observe that the semi-Lagrangian discontinuous Galerkin scheme
always decreases/increases the $L^{2}$ norm/entropy while the spline
based approach shows an oscillating behavior for some problems. Thus,
the spline based semi-Lagrangian method cannot be considered as the
exact solution of a perturbed physical system (as no such system would
be allowed to decrease the entropy; a consequence of the second law
of thermodynamics).

Analytically, we can show that the $L^{2}$ norm decreases in time
for the semi-Lagrangian discontinuous Galerkin scheme.
\begin{theorem}
The $L^{2}$ norm is a decreasing function of time for the semi-Lagrangian
discontinuous Galerkin scheme.\end{theorem}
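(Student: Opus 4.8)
The plan is to reduce the claim to a single well-known fact about orthogonal projections: for any $L^2$ function $u$, the $L^2$ projection $P$ onto the piecewise-polynomial subspace satisfies $\|Pu\|_2 \le \|u\|_2$, with equality precisely when $u$ already lies in the subspace. This is immediate from the Pythagorean identity $\|u\|_2^2 = \|Pu\|_2^2 + \|u - Pu\|_2^2$, since $Pu \perp (u - Pu)$. The second ingredient is that the translation operator $T_\tau$ is an isometry on $L^2$ of the (periodic) phase space: $\|T_\tau u\|_2 = \|u\|_2$, because translating a function merely relabels the integration variable. Both of these facts hold verbatim for the translation in the $x$-direction (the $A$ substep) and the translation in the $v$-direction (the $B$ substep), since in each case the electric field / velocity is frozen and the map is a rigid shift along one coordinate.

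First I would note that a single substep of the scheme has the form $u^{n+1} = P T_\tau u^n$, so
\[
\|u^{n+1}\|_2 = \|P T_\tau u^n\|_2 \le \|T_\tau u^n\|_2 = \|u^n\|_2 .
\]
Next I would point out that the Strang-split time step $\mathrm{e}^{\frac{\tau}{2}A}\mathrm{e}^{\tau B}\mathrm{e}^{\frac{\tau}{2}A}$ is, at the discrete level, a composition of three such $P T_\tau$ maps (with the caveat that $P$ acts in the appropriate direction for each substep, and that the $B$ substep uses the discrete electric field — but the electric field enters only as the shift amount and plays no role in the norm estimate). Composing the inequality three times gives $\|f^{n+1}\|_2 \le \|f^n\|_2$, so the $L^2$ norm is nonincreasing along the discrete flow, which is the assertion.

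The one point that needs a little care — and which I would expect to be the main (minor) obstacle — is the multidimensional bookkeeping: in $1{+}1$ dimensions the $A$ substep translates in $x$ for each fixed $v$, so one should view $P$ as projecting in $x$ cell-by-cell and invoke the norm bound fibre-wise in $v$, then integrate in $v$; and symmetrically for the $B$ substep. In higher dimensions one iterates this over the tensor-product structure of the splitting. None of this changes the estimate, but it is worth stating explicitly that $P$ is an orthogonal projection on the full phase-space $L^2$ and that $T_\tau$ preserves that full norm, so that the chain of inequalities is literally a composition of contractions. I would also remark, to connect with the surrounding discussion, that strict decrease occurs exactly when the translated function fails to lie in the piecewise-polynomial subspace — i.e. generically — which is consistent with the claim that the scheme introduces (only) diffusion and never anti-diffusion, and hence with the monotone entropy behaviour observed numerically.
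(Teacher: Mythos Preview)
Your proposal is correct and follows essentially the same approach as the paper: reduce to a single one-dimensional substep and use that the $L^{2}$ projection onto the piecewise-polynomial subspace is a contraction while translation is an isometry. The only cosmetic difference is that the paper makes the contraction explicit via the Legendre expansion (Bessel's inequality $\sum_{k=0}^{\ell} g_k^2 \le \sum_{k=0}^{\infty} g_k^2$), whereas you invoke the Pythagorean identity for orthogonal projections directly; your treatment of the composition over the three Strang substeps and the fibre-wise argument in the transverse variable is, if anything, a bit more explicit than the paper's.
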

\begin{proof}
Due to the structure of the Vlasov--Poisson equation it is sufficient
to consider a one-dimensional advection problems. Thus, we consider
a translated function $g(x)$ which is piecewise polynomial and thus
lies in $L^{2}$. We can then find a Legendre series
\[
\sum_{k=0}^{\infty}g_{k}p_{k}(x)
\]
which converges in the $L^{2}$ norm to $g$ \citep{pollard1972}.
Now, we have
\[
\Vert Pg\Vert_{2}^{2}-\Vert g\Vert_{2}^{2}=\sum_{k=0}^{\ell}g_{k}^{2}-\sum_{k=0}^{\infty}g_{k}^{2}\leq0
\]
which implies that the $L^{2}$ norm decreases in each time step.
\end{proof}

Based on an extensive numerical investigation, we conjecture that
a similar result holds true for the entropy. In fact, for piecewise
constant polynomials this immediately follows from the convexity of
the logarithm and for piecewise linear polynomials a similar (although
more tedious) argument can be given. However, we were not able to
prove the corresponding result for piecewise polynomials of arbitrary
degree.

There is one additional complication in case of the entropy. As the
numerical scheme can generate negative values, the entropy as given
by (\ref{eq:entropy}) is not even defined. Since we interpret negative
values as zero particle density, we use
\[
S=-\int\max(g,0)\log g\,\mathrm{d}(x,v)
\]
instead. Of course, using this formula can still result in a decrease
of entropy. However, we will see in the next section that this issue
only plays a negligible role in the numerical simulations.

\section{Numerical results\label{sec:numer-results}}

\change{Before we discuss the results of the medium and long time simulations, which
are the main focus of this paper, let us start by establishing some baseline results
for relatively small final times in case of the linear Landau damping (see section \ref{sec:ll} for more details on the parameters used). In this case we
have computed the error for both the electric field $E$ and the particle distribution $f$
at final time $T=12.5$ and $T=100$. The corresponding results are shown in Figure \ref{fig:ll-error}.}

\change{For final time $T=12.5$ we observe convergence of the discontinuous Galerkin schemes with the expected
order. In addition, the sixth order scheme is always superior to the fourth order scheme which in turn
is superior to the second order scheme. The comparison with the cubic spline interpolation is a bit more nuanced.
In particular, we see a clear advantage of the fourth and sixth order discontinuous Galerkin methods for the
error in the electric field. On the other hand, with respect to the error in the distribution function the cubic spline
interpolation performs best (except for very stringent accuracy requirements).}

\change{For the final time $T=100$ the analytic solution is almost identical to zero. However, numerical methods have significant
difficulties due to the recurrence effect (see section \ref{sec:ll} for more details). We can see this in the numerical
results very clearly as initially no convergence is observed (up to $128$ degrees of freedom). Since the recurrence effect
depends mostly on the grid spacing, the spline interpolation performs best in this case (this is due to the fact that an
equidistant grid minimizes the grid spacing). We also note that for low tolerances the second order dG method performs better than
the higher order variants if one is only interested in resolving the error in the electric field. Some of this conclusions have been
pointed our in earlier work. For example, it was observed in \cite{crouseilles2011,einkemmer2014} that higher order discontinuous Galerkin methods only have a weak influence on the recurrence effect.
}

\begin{figure}
    \begin{center}
        \hfill
        \includegraphics[width=6.5cm]{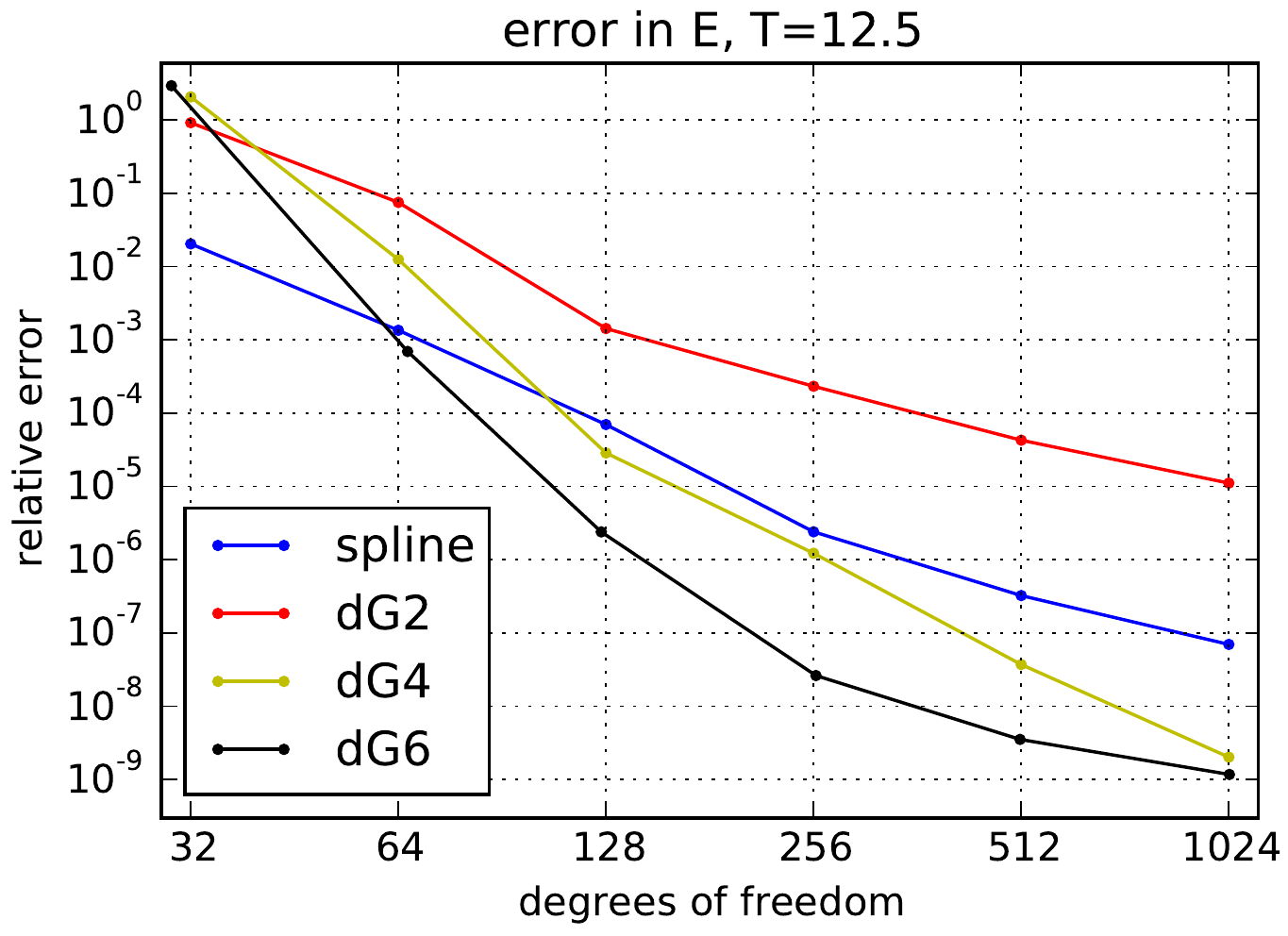}
        \hfill
        \includegraphics[width=6.5cm]{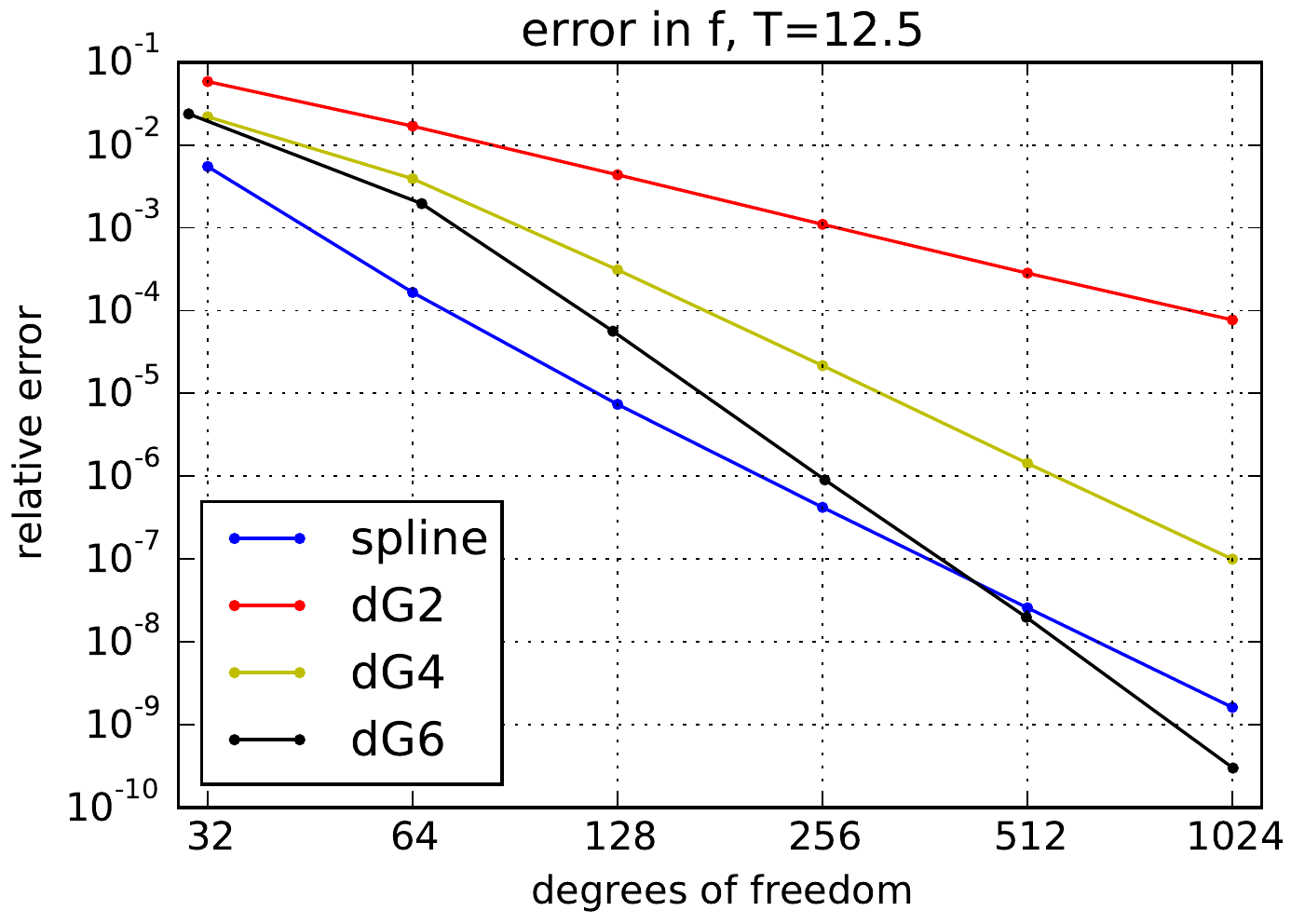}
        \hfill
    \end{center}
    
    \begin{center}
        \hfill
        \includegraphics[width=6.5cm]{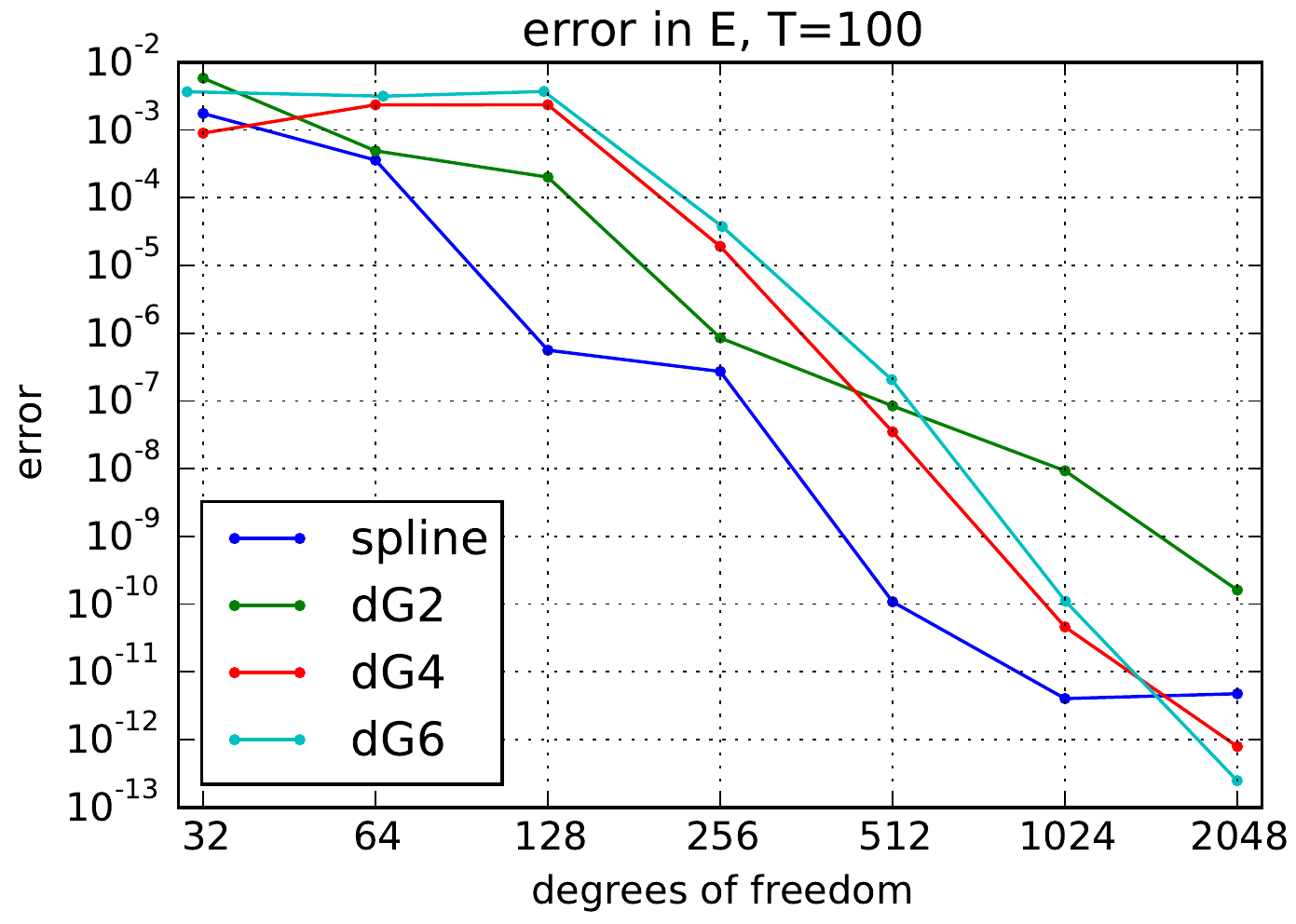}
        \hfill
        \includegraphics[width=6.5cm]{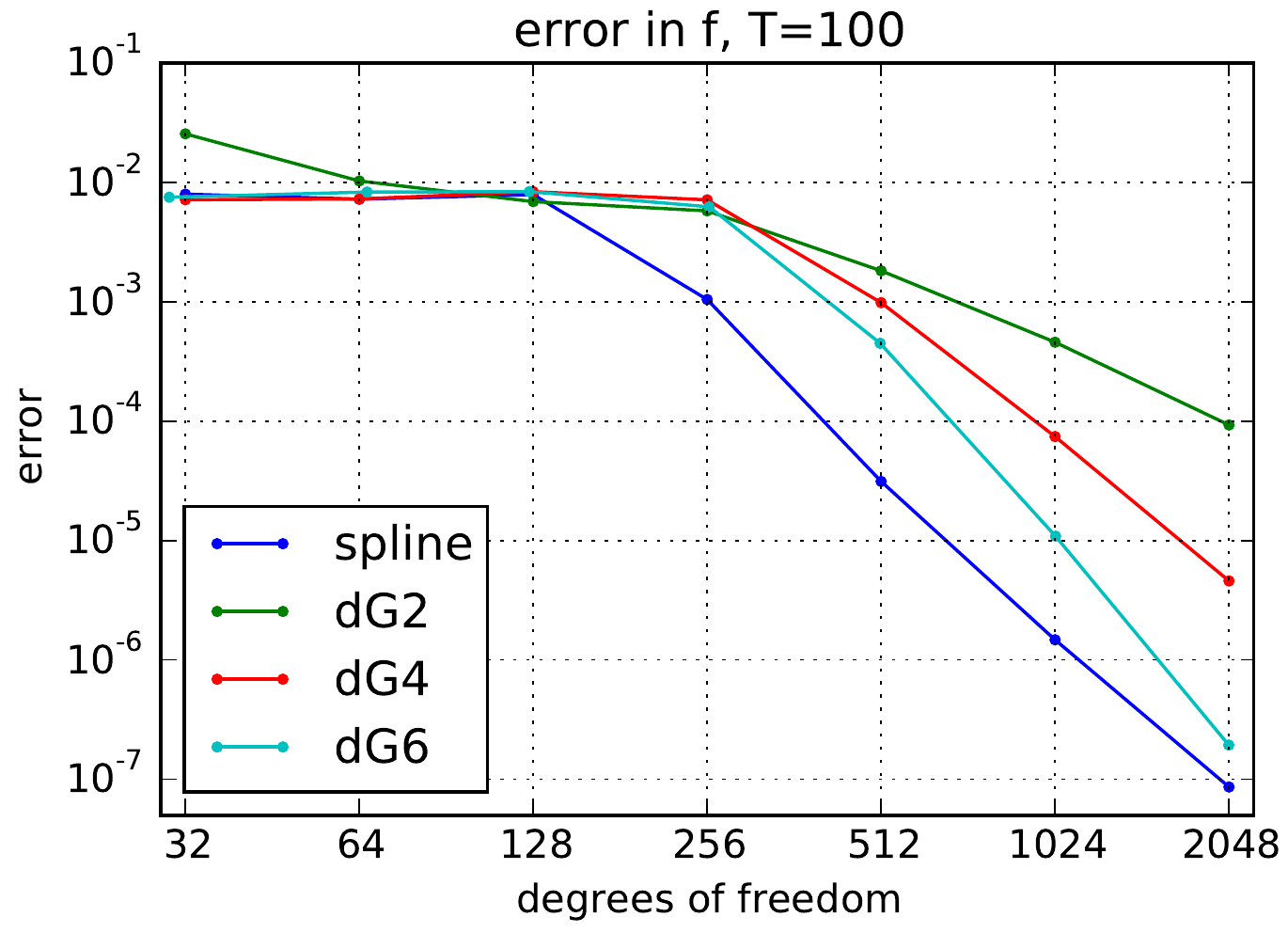}
        \hfill
    \end{center}
    \caption{\change{The error for linear Landau damping in the electric field $E$ and the particle distribution $f$ is shown as a function of
    the degrees of freedom (per dimension) for the second, fourth, and sixth order discontinuous Galerkin method and the cubic spline interpolation. The results for final time $T=12.5$ are shown in the top row and the results for final time $T=100$ are shown in the bottom row.} \label{fig:ll-error} }
\end{figure}

\change{The remainder of this} section is divided into two parts. In the first part we mainly
discuss the properties of the semi-Lagrangian discontinuous Galerkin
scheme and compare its efficiency to the cubic spline interpolation.
We will consider both classic test problems (linear and nonlinear
Landau damping, bump-on-tail instability) and a somewhat less well
known test problem (expansion of a plasma blob, plasma echo). In the
second part we will consider the long time behavior of the semi-Lagrangian
discontinuous Galerkin scheme. These simulations are conducted with
the highly parallel implementation described in \citet{einkemmer2015}.

Since one of our goals is to compare the discontinuous Galerkin scheme
with the cubic spline interpolation, a measure of the cost of a given
numerical scheme has to be established. In a sequential implementation
both computational efficiency as well as memory consumption is a possible
choice. However, comparing arithmetic operations on a modern (vectorized
and multi-core) CPU requires a very delicate analysis that depends
on both algorithmic and implementation details (for example, taking
into account how the linear system is solved in the cubic spline interpolation
or whether such a scheme could be vectorized). In addition, on a distributed
memory system (i.e.~on a cluster), especially for the cubic spline
interpolation, further problems occur due to the all-to-all communication
required in solving the linear system. Some remedies have been proposed
in \citet{crouseilles2009}. However, these usually
change the numerical scheme in a non-trivial way (which might also
have a detrimental effect on the conserved quantities). In fact, this
problem is one of the major motivations why we consider the discontinuous
Galerkin approach.

\change{
In the present study we will exclusively use the number of degrees of freedom,
i.e.~the memory consumed, as the measure of efficiency. This is
reasonable because for higher dimensional simulations the amount of memory consumed
is often the most important criterium for Vlasov simulations. But perhaps even more
important is the fact that such algorithms are bandwidth limited on all modern
architectures. Consequently, how many bytes have to be transferred to and from the CPU
determines the run time of the algorithm. We have implemented our discontinuous Galerkin
approach on a dual socket Intel Xeon E5-2630 v3 workstation with a (theoretical) memory bandwidth of 59 GB/s.
For the second order implementation (dG2)/fourth order (dG4) implementation  we achieve
approximately $94$\%/ $89$\% of the theoretically possible performance on that machine. That means
that for the advection step with contiguous memory access we require approximately $0.3$ ns per degree of freedom.
Let us also note that this means, in particular, that no spline implementation is able to outperform
our algorithm by more than $10$\% (in all likelihood, even an optimized spline implementation
will probably be significantly slower; due to the fact that extra data structures are required to
store the spline coefficients). We further note that the discontinuous Galerkin
method has been implemented on graphic processing units (GPUs) and the Intel Xeon Phi. The corresponding 
performance measurements, which show a significant speedup on both of these architectures, are given in \cite{einkemmer2016}.
}

In the remainder of this section we will conduct two-dimensional simulations
of the Vlasov--Poisson equations using $128$ degrees of freedom in
each dimension (the coarse problem) and $512$ degrees of freedom
in each dimension (the fine problem).

\subsection{Nonlinear Landau damping\label{sub:nl}}

We begin with the classic problem of nonlinear Landau damping. The
initial value
\[
f(0,x,v)=\frac{1}{\sqrt{2\pi}}\mathrm{e}^{-v^{2}/2}(1+\alpha\cos kx)
\]
is posed on the domain $[0,4\pi]\times[-6,6]$ with $k=\tfrac{1}{2}$
and $\alpha=\tfrac{1}{2}$. Periodic boundary conditions are imposed
in both the $x$- and the $v$-direction. A large body of physics
literature has been devoted to this problem and a number of numerical
simulations have been conducted (see, for example, \citet{brunetti2000}
and \citet{manfredi1997}). As such the behavior, at least for moderate
times, is fairly well understood. Initially we observe a decay in
the electric energy (the Landau damping phenomenon) that after some
time stabilizes and gives way to oscillations of the electric energy.
The nonlinear Landau damping is numerically challenging because even
though the initial value is perfectly smooth, small scale structures
(so-called filaments) develop in \change{the two-dimensional} phase space as the system evolves
in time. Thus, in order to integrate the system up to a given precision
an extremely high resolution in velocity space is needed. However,
what we are interested in is to what extend a relatively coarse discretization
can give qualitatively correct results.

The numerical simulations for the cubic spline and the discontinuous
Galerkin approach are given in figure \ref{fig:nl-128} (for the coarse
discretization) and in figure \ref{fig:nl-512} (for the fine discretization).
We observe that for the discontinuous Galerkin method the error in
the current is close to machine precision (even though this quantity
is not exactly conserved). The cubic spline interpolation has an advantage
with respect to the energy error (which, however, is quite accurately
conserved for this test problem). We also observe a distinct difference
between the second order method ($p=1$) and the higher order methods
with respect to energy conservation (as predicted by the results derived
in section \ref{sec:conservation}).

On the other hand, the discontinuous Galerkin scheme has an advantage
with respect to the $L^{1}$ norm (an error of 5\% vs 1-2\% for the
coarse problem). Thus, the discontinuous Galerkin scheme produces
fewer negative values. If we consider the qualitative features of
the numerical solution we might be tempted to conclude that the spline
interpolation reproduces more clearly the features of the exact solution.
However, upon closer inspection of the results for the fine discretization
(where the maximal amplitude is significantly reduced for the spline
interpolation) and taking into account the error in the $L^{1}$ norm,
we conclude that in fact the spline interpolation produces significant
overshoot into the numerical simulation. 

Let us also discuss the effect of the order of the discontinuous Galerkin
approximation. For the coarse problem the order does only play a minor
role (although energy conservation is better once we consider a numerical
scheme with piecewise polynomials of degree $2$ or above). Other
than that the performance of the numerical scheme is almost independent
of the order.

\subsection{Bump-on-tail instability\label{sub:bot}}

The second problem we consider is the so-called bump-on-tail instability.
In this case we prescribe the initial value
\[
f(0,x,v)=\frac{1}{\sqrt{2\pi}}\left(\alpha\mathrm{e}^{-v^{2}/2}+\beta\mathrm{e}^{-4(v-2.5)^{2}}(1+\gamma\cos x)\right)
\]
on the domain $[0,4\pi]\times[-6,6]$ and with $\alpha=0.8$, $\beta=0.2$
and $\gamma=0.1$. Periodic boundary conditions in both the $x$-
and the $v$-direction are imposed. Thus, we have a stationary thermal
plasma in which $80$\% of the mass is concentrated. In addition,
a smaller beam with average velocity $v=2.5$ penetrates the stationary
plasma. This configuration is an equilibrium of the Vlasov--Poisson
equation. However, due to the perturbation that is added, a traveling
vortex appears in phase space. The bump-on-tail instability shows
a quiet phase that is succeeded by the relative rapid development
of the vortex which then remains stable for a long time. Since the
bump-on-tail instability exhibits chaotic behavior the onset of the
vortex might be different for different numerical schemes (even for
relatively fine discretizations). 

The numerical simulations for the cubic spline and the discontinuous
Galerkin approach are given in figure \ref{fig:bot-128} (for the
coarse discretization) and in figure \ref{fig:bot-512} (for the fine
discretization). The current in case of the discontinuous Galerkin
scheme is only conserved up to $10^{-5}$ which clearly confirms the
behavior discussed in section \ref{sec:conservation}. However, this
fact does not seem to diminish the overall performance of the numerical
scheme. For the higher order discontinuous Galerkin scheme and for
the cubic spline interpolation all the remaining conserved quantities
show a similar behavior. The exception being the second order discontinuous
Galerkin scheme which exhibits significantly worse performance across
the board (contrary to the results obtained in the previous section
for the nonlinear Landau damping). 

Let us remark that for both schemes the time evolution of the electric
energy is almost indistinguishable from the finer space discretization
once the instability has been saturated.

\subsection{Expansion into a uniform ion background}

In this section we consider the following initial value
\[
f(0,x,v)=\frac{1}{2\pi}\mathrm{e}^{-v^{2}/2}\mathrm{e}^{-(x-2\pi)^{2}/2}
\]
on the domain $[0,4\pi]\times[-6,6]$. As before, we impose periodic
boundary conditions in both directions. This is the only test problem
where the initial value is localized in space as well as in velocity.
This problem can be interpreted as the expansion of an electron blob
(in thermodynamic equilibrium) into a region of uniform ion density.
This is a challenging problem because there is no physical mechanism
that would hold the initial blob in place. Thus, each particle with
a given velocity expands at its own speed which due to the periodicity
of the problem yields a very irregular phase space distribution (similar
to the nonlinear Landau damping). What sets this problem apart from
the nonlinear Landau damping, however, is that the filamentation is
driven by the free streaming part of the Vlasov equation and is not
merely superimposed on top of a Maxwellian distribution in velocity
space. 

The numerical simulations for the cubic spline and the discontinuous
Galerkin approach are given in figure \ref{fig:vac-128} (for the
coarse discretization) and in figure \ref{fig:vac-512} (for the fine
discretization). This problem seems to be especially challenging for
the cubic spline interpolation. For the coarse discretization we observe
an error of almost 50\% in the $L^{1}$ norm, which renders any interpretation
of the particle density function impossible. In addition, we observe
two unphysical peaks in the electric energy. 

On the other hand, none of these difficulties appear for the semi-Lagrangian
discontinuous Galerkin scheme. With respect to the $L^{1}$ norm the
second order dG method is most favorable although that method only
conserves energy up to an accuracy of $10^{-3}$ (compared to $10^{-5}$
for the higher order methods). This favorable behavior is despite
the fact that entropy conservation is somewhat better for the cubic
spline approximation and the $L^{2}$ norm conservation is significantly
better.

Let us continue to discuss the cubic spline interpolation. For this
method the entropy and the $L^{2}$ norm show an oscillating behavior
in time. Clearly this is a numerical artifact as has been discussed
in section \ref{sec:conservation}. None of these artifacts are present
for the discontinuous Galerkin approach (as is expected based on the
theoretical considerations in section \ref{sec:conservation}).

\subsection{Linear Landau damping\label{sec:ll}}

The linear Landau damping problem is just the strong Landau damping
introduced earlier but with $\alpha$ chosen sufficiently small such
that nonlinear effects are small. In this section we consider $\alpha=10^{-2}$.
We have postponed this classic problem until now because in some sense
the problem is very easy while in another sense it is very difficult.
The numerical results for the coarse grid are shown in figure \ref{fig:ll-128}.
All of the conserved quantities look very reasonable. In fact, $L^{1}$
is conserved up to machine precision and the $L^{2}$ norm and the
entropy are conserved up to approximately an error of $10^{-5}$ (by
far the lowest error among the problems considered so far). Nevertheless,
the result of the numerical simulation is completely wrong (even considering
a qualitative analysis). From analytical results it is well known
that the electric energy decays exponentially as a function in time.
What we observe for the numerical simulation, however, is that it
oscillates. This is the so-called recurrence effect; a numerical artifact
which implies that the number of grid points have to be chosen proportional
to the final time in order to obtain reasonable results. This was
already recognized in \citet{cheng1976} and is the reason why despite
the favorable conservation properties the linear Landau damping problem
is difficult for any numerical scheme.

Unfortunately, a number of interesting problems in plasma physics
(such as plasma echos; see \citet{hou2011} or \change{\cite{galeotti2005} for a good exposition})
suffer from the recurrence effect. The problem can be overcome to
some extend by filtering high frequencies in phase space (a technique
referred to as filamentation filtration). In fact, a significant body
of literature has been developed on this topic \citep{klimas1994,eliasson2002,einkemmer2014rec}. However, even though,
such methods can alleviate the problem to some extend, in many situations
a fine velocity discretization is still required in order to resolve
the interesting dynamics of the system. However, for such a fine discretization
the conservation behavior is even better than for the relative coarse
discretization used here.

At this point one might rightfully object that there is no guarantee
that filamentation filtration will respect the invariants (even if
they are conserved by the basic numerical scheme). Thus it seems prudent
to investigate the effect of such a modification to the algorithm
outlined in section \ref{sec:eq-numerical-methods}. The results for
the method introduced in \citet{einkemmer2014rec} are shown in figure
\ref{fig:ll-128}. There is some effect on the entropy and the $L^{1}$
norm which, however, are still conserved up to a accuracy of $10^{-2}$.
However, despite the fact that we essentially remove high frequencies
neither the mass nor any of the conserved quantities, except for the
$L^{1}$ norm and the entropy, are adversely affected. This behavior
is due to the fact that the magnitude of the high frequencies is very
small and thus negative values appear. Since we cut off a couple of
frequencies this averages out and leaves the mass intact (while affecting
the $L^{1}$ norm).

Before proceeding, let us return to the question of how coarse a space
discretization is sufficient in order to obtain reasonable results.
To do this we consider a model of the plasma echo phenomenon. That
is, we prescribe the initial value
\[
f(0,x,v)=\frac{1}{\sqrt{2\pi}}\mathrm{e}^{-v^{2}/2}(1+\alpha\cos k_{1}x)
\]
with $\alpha=10^{-3}$ and $k_{1}=12\pi/100$. This initial perturbation
is damped away by Landau damping. Then at time $t=200$ we excite
a second perturbation by adding 
\[
\frac{\alpha}{\sqrt{2\pi}}\mathrm{e}^{-v^{2}/2}\cos k_{2}x,
\]
where $k_{2}=25\pi/100$, to the particle density function. This perturbation
is also damped away by Landau damping. However, all the information
is still stored in the particle density function and due to constructive
interference an echo appears at $t=400$. In fact, multiple echoes
can be observed. We will investigate how well the first echo is resolved
by the semi-Lagrangian discontinuous Galerkin method. The numerical
results are shown in figure \ref{fig:echo}. We note that in this
situation increasing the order in the velocity direction, while keeping
the number of degrees of freedom constant, significantly degrades
the numerical solution. This is particular prominent for approximations
of order three and higher, where the plasma echo is barely recognizable.
In the spatial direction the results are comparable no matter what
order is chosen. Note, however, that decreasing the number of degrees
of freedom rapidly diminishes the numerical solution and the wave
echo disappears. Thus, there is no point in studying the conservation
properties at low resolution in the spatial direction as a certain
number of cells are required even if one is only interested in resolving
the first echo accurately.

\subsection{Long time behavior (Nonlinear Landau)}

We will now investigate the long time behavior for the semi-Lagrangian
discontinuous Galerkin approach. To that end we integrate the Vlasov--Poisson
equations until $T=2\cdot10^{4}$ for the classic nonlinear Landau
damping problem (as stated in section \ref{sub:nl}). The results
are shown in figure \ref{fig:nl-5k} and compare the second order
method with the fourth order method. Note that even though the conservation
properties are quite similar (especially with respect to the $L^{2}$
norm and the entropy) the second order method produces qualitatively
wrong results (the electric energy decays exponentially in time).
Note that this rather different behavior is not expected based on
the numerical results that were obtained up to final time $T=400$
(see section \ref{sub:nl}). On the other hand, the error in all the
conserved quantities is stable over this time interval.

\subsection{Long time behavior (bump-on-tail instability)}

In this section we investigate the long time behavior of the semi-Lagrangian
discontinuous Galerkin approach for the bump-on-tail instability by
integrating the Vlasov--Poisson equation until $T=2\cdot10^{4}$.
The numerical results are shown in figure \ref{fig:bot-5k}. Also
in this case the second order method produces qualitatively wrong
results. However, the conservation properties are also markedly worse
for the second order method as compared to the fourth and sixth order
methods. This is especially true for the entropy and $L^{2}$ norm
(which measures diffusion) and we are thus not surprised that the
second order method washes out the vortex in phase space. We also
observe that the error in the current is quite large (conserved only
up to approximately $10^{-2}$). This is true independent of the order
of the method. However, this behavior is not detrimental to the other
conserved quantities nor is it detrimental to the qualitative properties
of the numerical solution.

\subsection{Vortex merging in the bump-on-tail instability}

In the previous section only one vortex appears in the bump-on-tail
instability. However, for a certain regime of parameters multiple
vortices can be observed in phase space. The difficulty for a numerical
scheme is then to preserve these vortices as long as possible. However,
most numerical schemes eventually lead to a secondary instability
that results in the three vortices merging into a single one. In the
following we will consider the problem from \citet{crouseilles2011}
for which three vortices are observed in phase space. The numerical
results are shown in figure \ref{fig:three-vortices}. We note that
vortex merging starts at approximately $t=650$ for the fourth order
discontinuous Galerkin method and at approximately $t=1200$ for the
spline interpolation. The sixth order discontinuous Galerkin schemes
performs best in this test (vortex merging starts at approximately
$t=1250$).

\section{Conclusion\label{sec:Conclusion}}

We conclude that the semi-Lagrangian discontinuous Galerkin scheme
is certainly competitive compared to the cubic spline interpolation
with respect to the invariants considered in this work. The latter
seems to have a slight edge with respect to energy conservation (which,
however, is always preserved up to an accuracy of $10^{-4}$ or better).
On the other hand, the discontinuous Galerkin scheme outperforms the
cubic spline interpolation with respect to the conservation of the
$L^{1}$ norm (i.e.~ positivity preservation) in all tests. This
is particularly apparent in one test (the expansion into a uniform
ion background) where the cubic spline interpolation results in over
and undershoots that renders a physical interpretation of the particle
density function impossible. 

The second order variant of the discontinuous Galerkin scheme is usually
quite diffusive. However, using the fourth or a higher order variant
remedies this deficiency. Based on the conservation of the invariants
the fourth order method is sufficient for most problems. However,
for the bump-on-tail instability we observed a significant improvement,
both in conservation as well as in long time behavior (onset of vortex
merging), for the sixth order scheme. In general, the numerical results
obtained emphasize the efficiency of high order methods (even though
those methods reduce the number of cells present in the numerical
simulation and this behavior is not expected based on the regularity
of the solution). The exception to this rule is the plasma echo phenomenon
(where increasing the order actually degrades the numerical solution). 

In addition, we have seen that transferring the semi-Lagrangian scheme
to an equidistant grid (in order to solve Poisson's equation) does
not result in any issues with respect to conservation of the invariants
and long time behavior. The same is true for the filamentation filtration
approach considered in this paper.

\section*{Acknowledgements}
The computational results presented have been achieved using the Vienna Scientific Cluster (VSC).

\section*{}

\bibliographystyle{jpp}
\bibliography{vlasov-conservation}

\begin{figure}
\begin{centering}
\includegraphics[width=14cm]{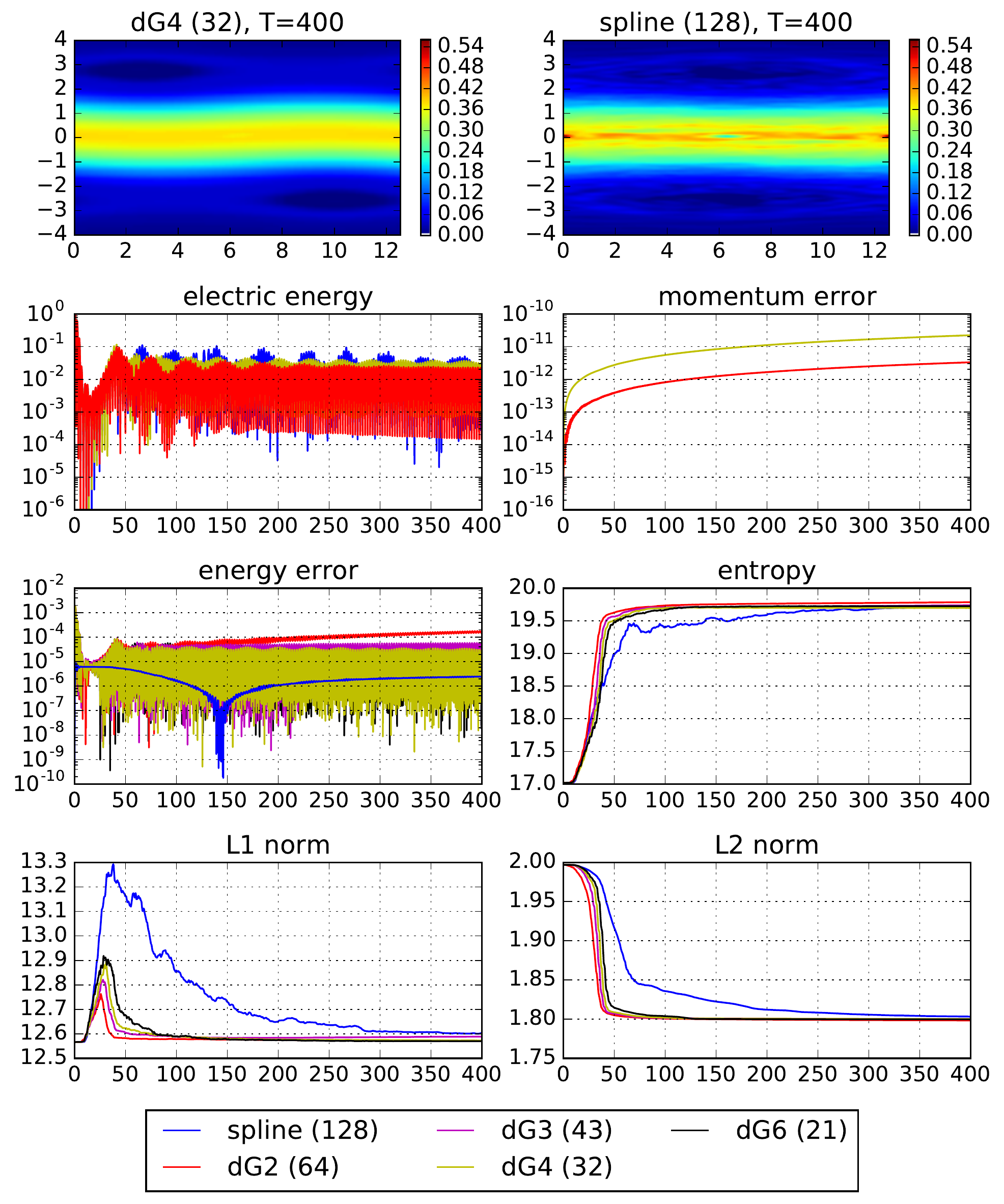}
\par\end{centering}

\caption{This figure shows the particle density $f$ at $T=400$ and the time
evolution of the electric energy for the nonlinear Landau damping
problem. In addition, the error in the current, energy, entropy, $L^{1}$
norm, and $L^{2}$ norm are shown. For all numerical schemes $128$
degrees of freedom are employed per space dimension. The order of
the discontinuous Galerkin (dG) method is indicated and the number
of cells are given in parenthesis. \label{fig:nl-128}}
\end{figure}

\begin{figure}
\begin{centering}
\includegraphics[width=14cm]{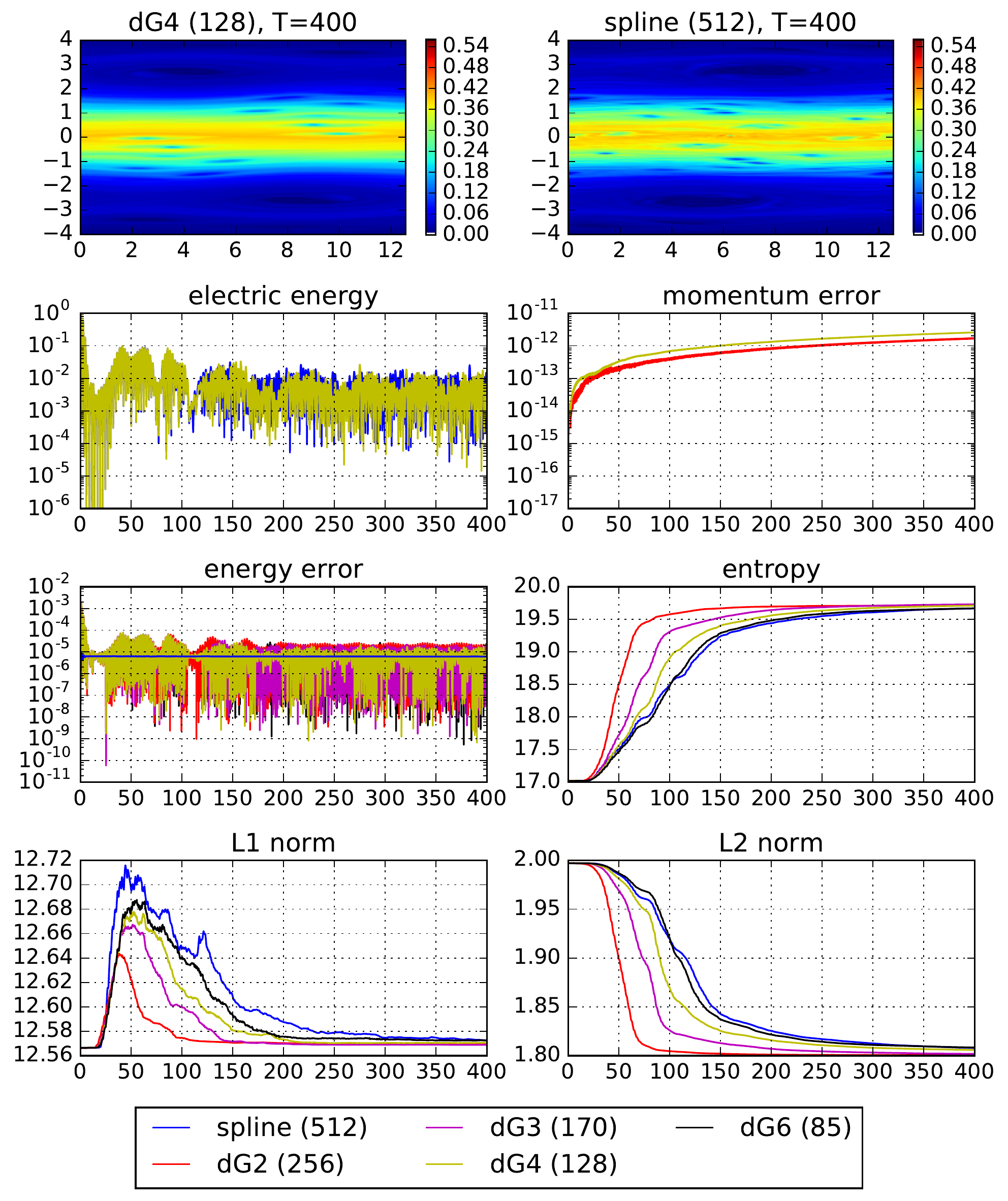}
\par\end{centering}

\caption{This figure shows the particle density $f$ at $T=400$ and the time
evolution of the electric energy for the nonlinear Landau damping
problem. In addition, the error in the current, energy, entropy $L^{1}$,
and $L^{2}$ norm are shown. For all numerical schemes $512$ degrees
of freedom are employed per space dimension. The order of the discontinuous
Galerkin (dG) method is indicated and the number of cells are given
in parenthesis. \label{fig:nl-512}}
\end{figure}

\begin{figure}
\begin{centering}
\includegraphics[width=14cm]{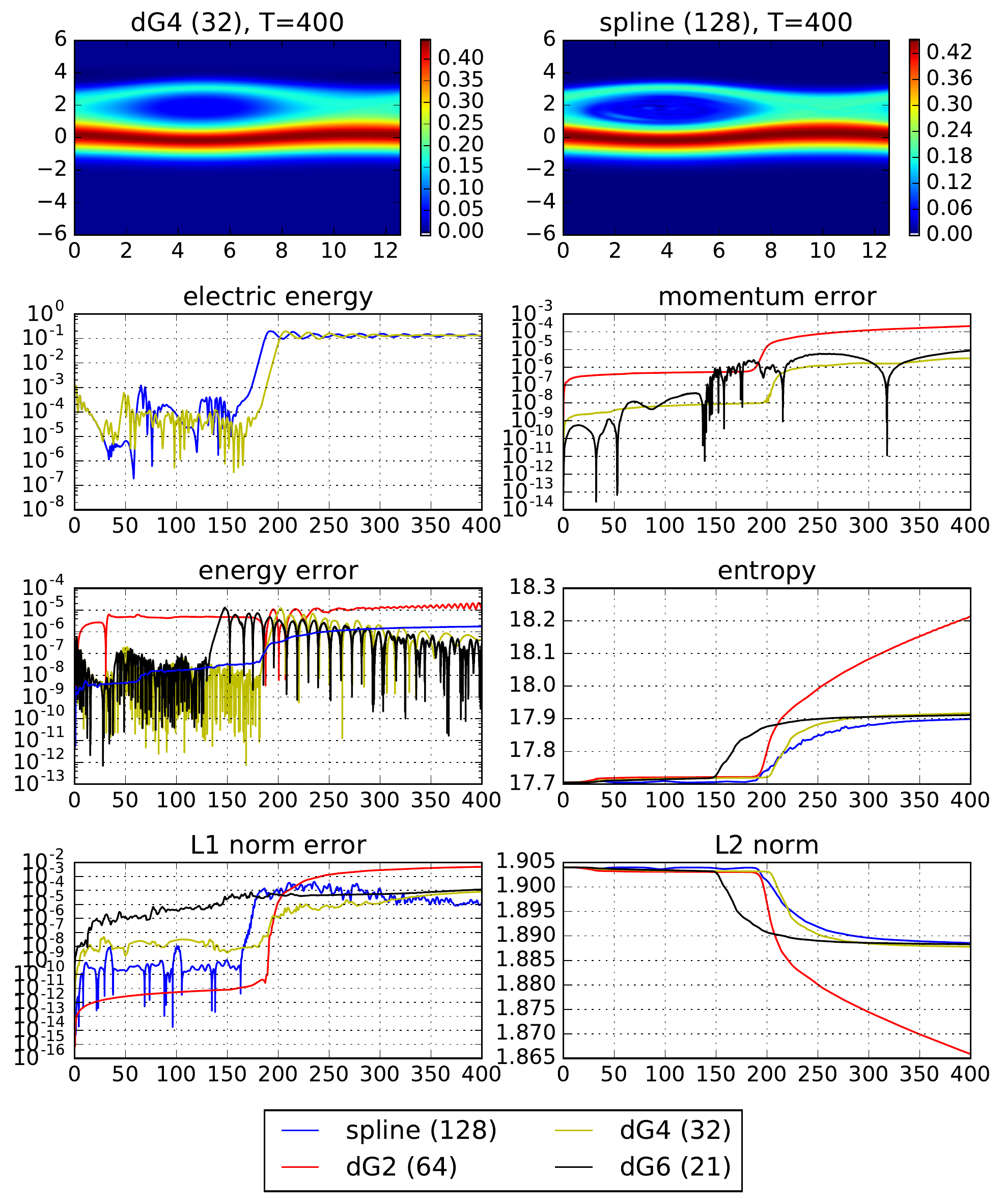}
\par\end{centering}

\caption{This figure shows the particle density $f$ at $T=400$ and the time
evolution of the electric energy for the bump-on-tail instability.
In addition, the error in the current, energy, entropy, $L^{1}$ norm,
and $L^{2}$ norm are shown. For all numerical schemes $128$ degrees
of freedom are employed per space dimension. The order of the discontinuous
Galerkin (dG) method is indicated and the number of cells are given
in parenthesis. \label{fig:bot-128}}
\end{figure}

\begin{figure}
\begin{centering}
\includegraphics[width=14cm]{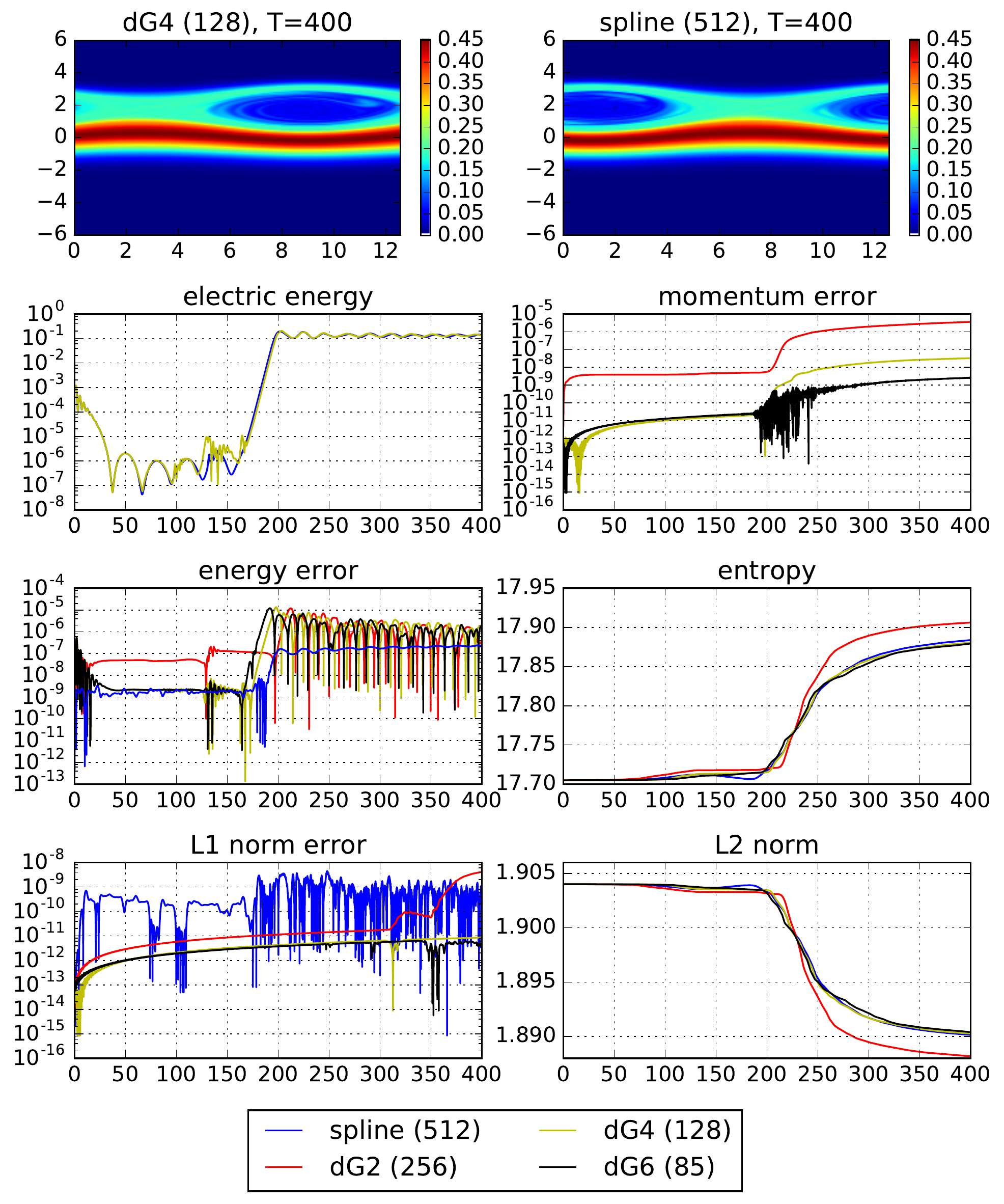}
\par\end{centering}

\caption{This figure shows the particle density $f$ at $T=400$ and the time
evolution of the electric energy for the bump-on-tail instability.
In addition, the error in the current, energy, entropy $L^{1}$, and
$L^{2}$ norm are shown. For all numerical schemes $512$ degrees
of freedom are employed per space dimension. The order of the discontinuous
Galerkin (dG) method is indicated and the number of cells are given
in parenthesis. \label{fig:bot-512}}
\end{figure}

\begin{figure}
\begin{centering}
\includegraphics[width=14cm]{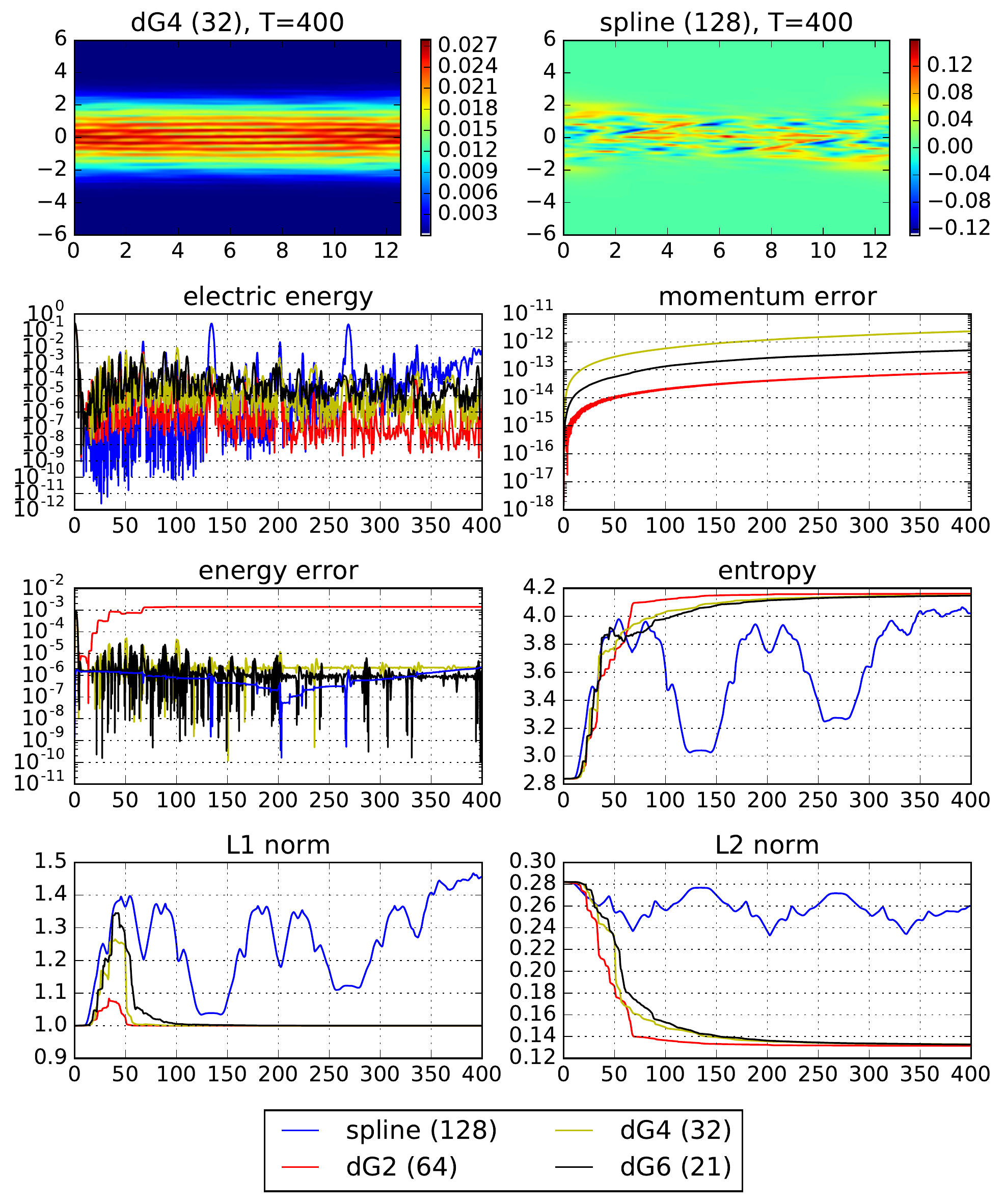}
\par\end{centering}

\caption{This figure shows the particle density $f$ at $T=400$ and the time
evolution of the electric energy for the expansion into a uniform
ion background. In addition, the error in the current, energy, entropy,
$L^{1}$ norm, and $L^{2}$ norm are shown. For all numerical schemes
$128$ degrees of freedom are employed per space dimension. The order
of the discontinuous Galerkin (dG) method is indicated and the number
of cells are given in parenthesis.\label{fig:vac-128}}
\end{figure}

\begin{figure}
\begin{centering}
\includegraphics[width=14cm]{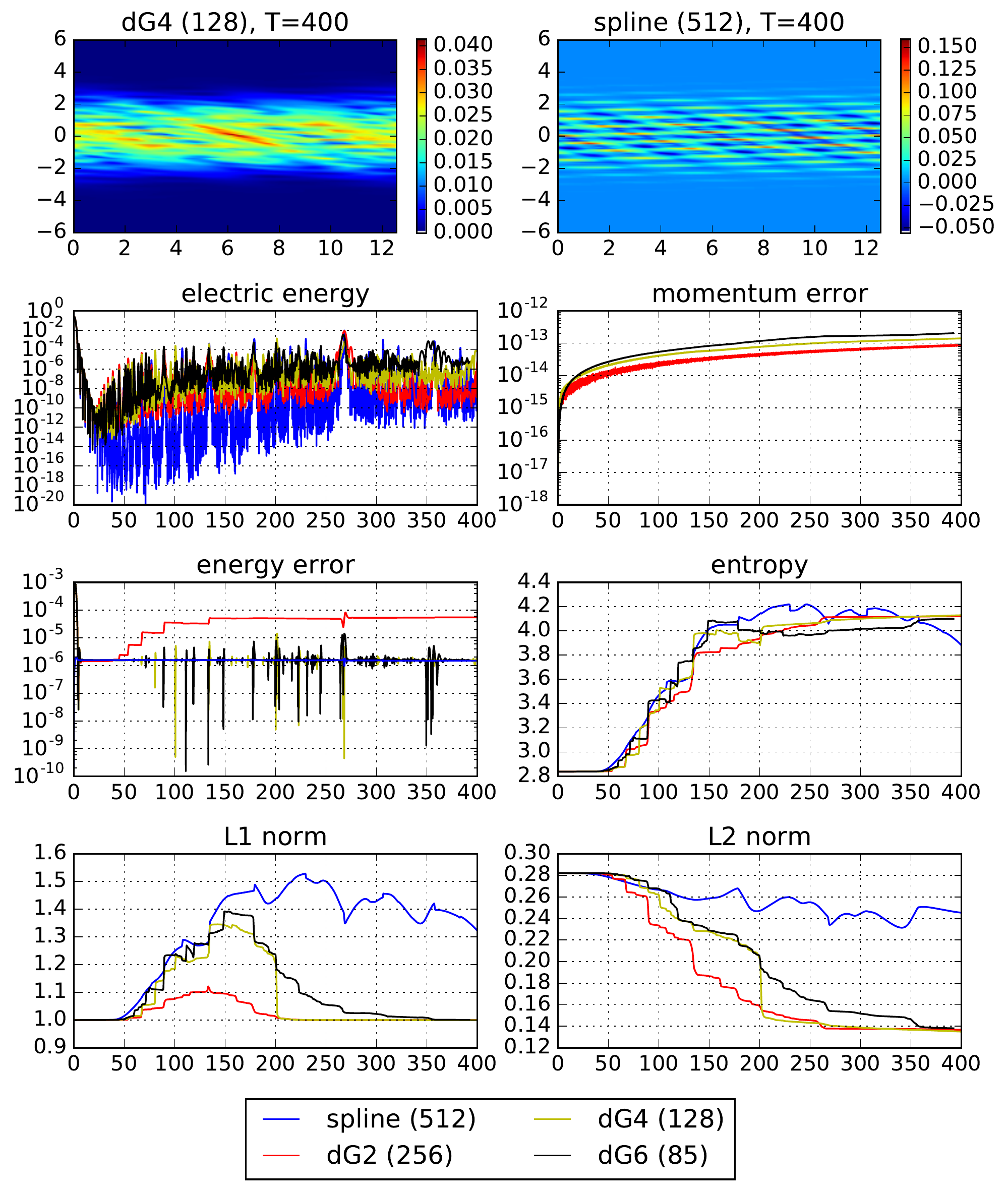}
\par\end{centering}

\caption{This figure shows the particle density $f$ at $T=400$ and the time
evolution of the electric energy for the expansion into a uniform
ion background. In addition, the error in the current, energy, entropy,
$L^{1}$ norm, and $L^{2}$ norm are shown. For all numerical schemes
$512$ degrees of freedom are employed per space dimension. The order
of the discontinuous Galerkin (dG) method is indicated and the number
of cells are given in parenthesis.\label{fig:vac-512}}
\end{figure}

\begin{figure}
\begin{centering}
\includegraphics[width=14cm]{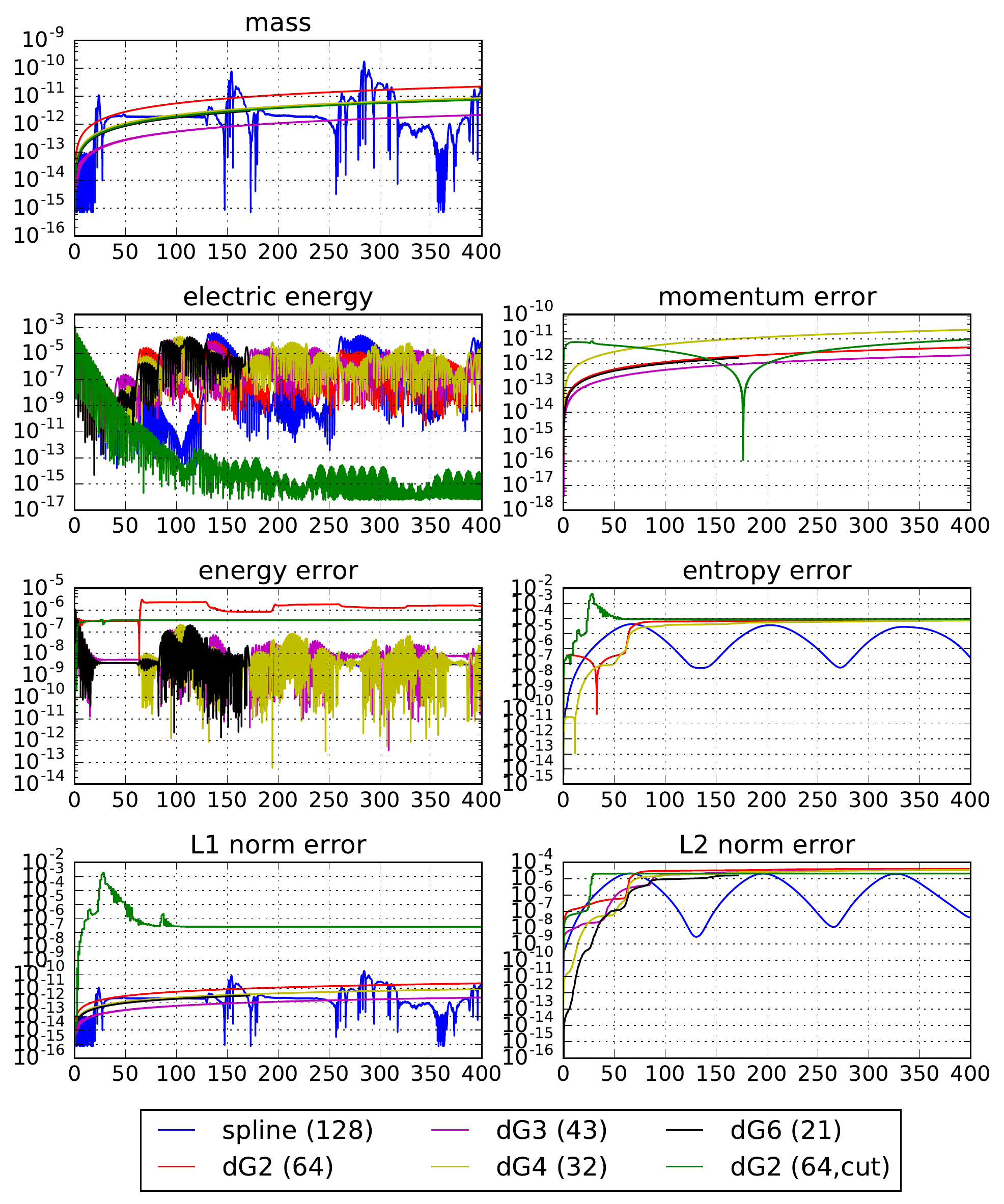}
\par\end{centering}

\caption{This figure shows the time evolution of the electric energy for the
linear Landau damping. In addition, the error in the current, energy,
entropy, $L^{1}$ norm, and $L^{2}$ norm are shown. For all numerical
schemes $128$ degrees of freedom are employed per space dimension.
The order of the discontinuous Galerkin (dG) method is indicated and
the number of cells are given in parenthesis.\label{fig:ll-128}}
\end{figure}

\begin{figure}
\begin{centering}
\includegraphics[width=14cm]{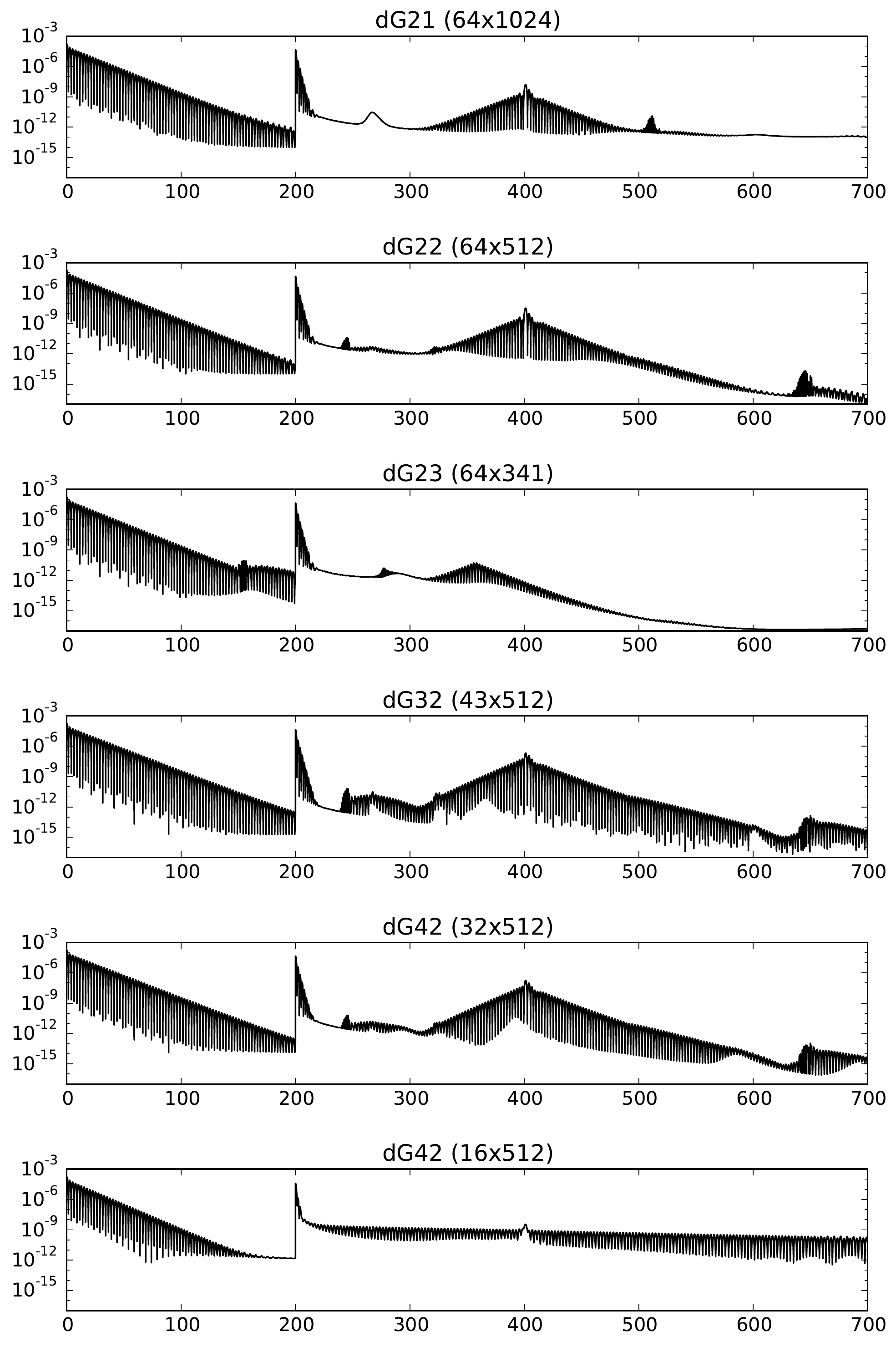}
\par\end{centering}

\caption{This figure shows the evolution of the electric energy for the plasma
echo problem. The order of the discontinuous Galerkin (dG) method
is indicated (dGab denotes a method with order a in the $x$-direction
and order b in the $v$-direction) and the number of cells are given
in parenthesis.\label{fig:echo}}
\end{figure}

\begin{figure}
\begin{centering}
\includegraphics[width=14cm]{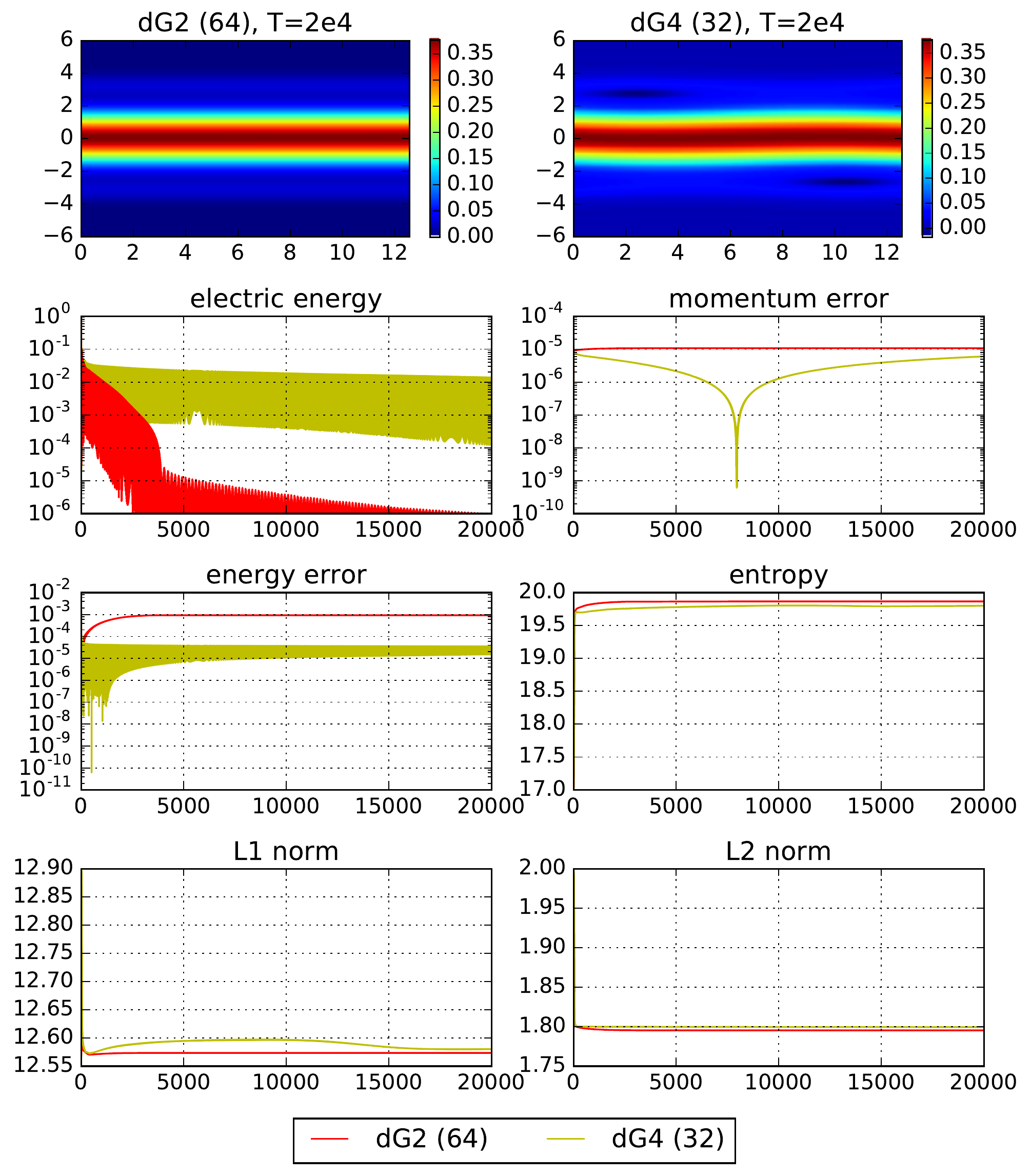}
\par\end{centering}

\caption{This figure shows the particle density $f$ at $T=2\cdot10^{4}$ and
the time evolution of the electric energy for the nonlinear Landau
damping. In addition, the error in the current, energy, entropy, $L^{1}$
norm, and $L^{2}$ norm are shown. For all numerical schemes $128$
degrees of freedom are employed per space dimension. The order of
the discontinuous Galerkin (dG) method is indicated and the number
of cells are given in parenthesis.\label{fig:nl-5k}}
\end{figure}

\begin{figure}
\begin{centering}
\includegraphics[width=14cm]{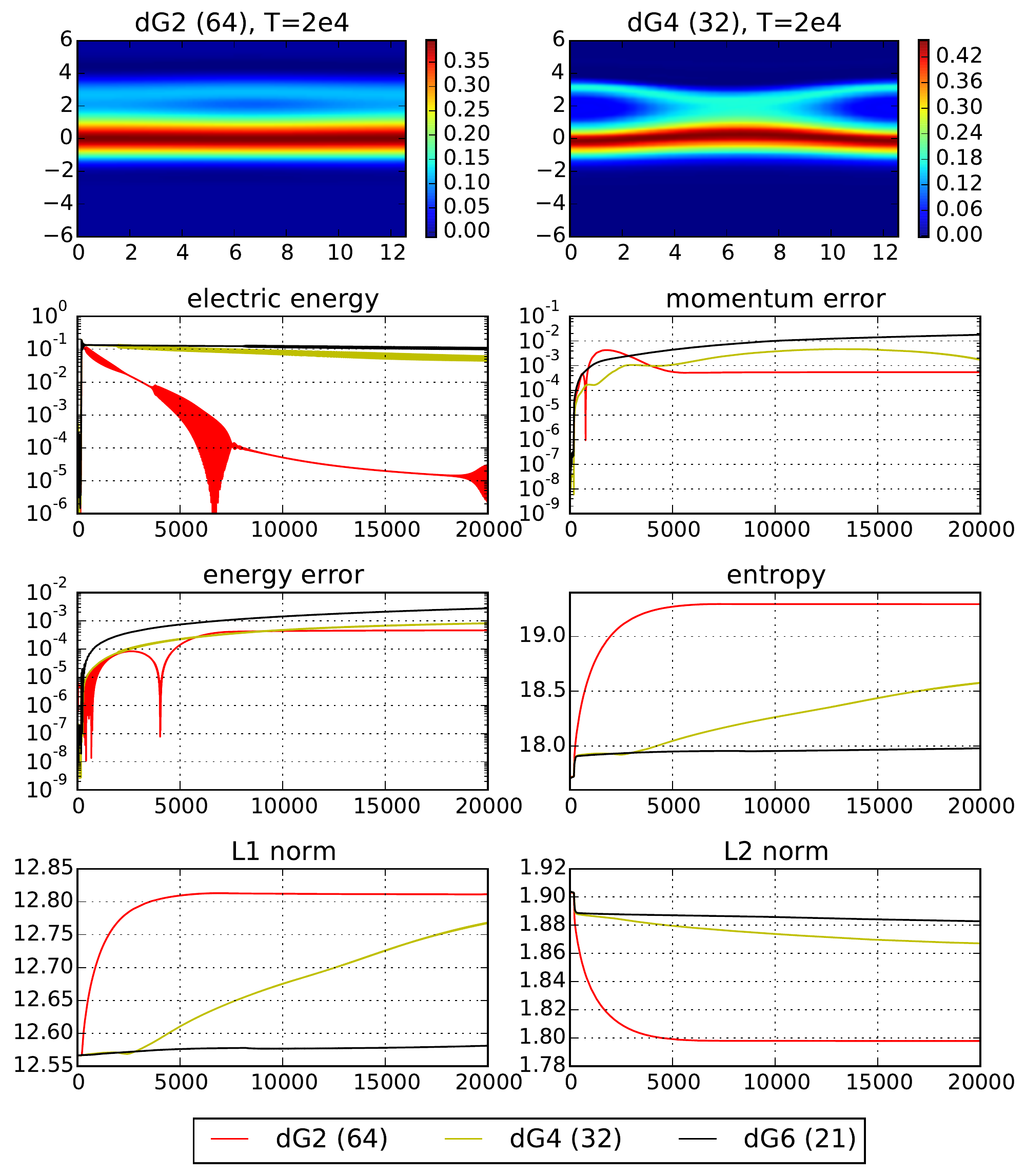}
\par\end{centering}

\caption{This figure shows the particle density $f$ at $T=2\cdot10^{4}$ and
the time evolution of the electric energy for the bump-on-tail instability.
In addition, the error in the current, energy, entropy, $L^{1}$ norm,
and $L^{2}$ norm are shown. For all numerical schemes $128$ degrees
of freedom are employed per space dimension. The order of the discontinuous
Galerkin (dG) method is indicated and the number of cells are given
in parenthesis.\label{fig:bot-5k}}
\end{figure}

\begin{figure}
\begin{centering}
\includegraphics[width=14cm]{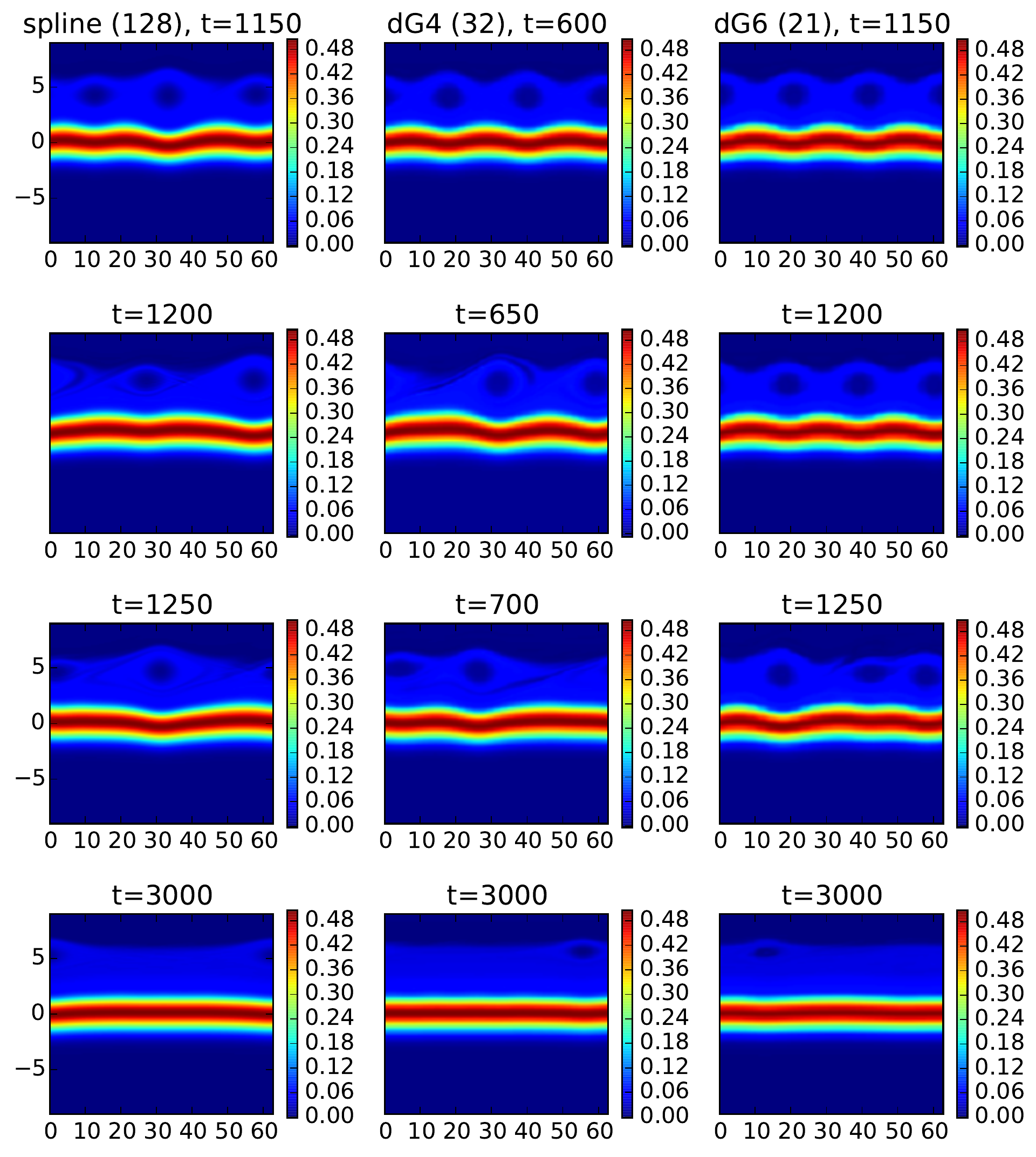}\caption{This figure shows the particle density $f$ at different times for
the three vortex bump-on-tail instability. For all numerical schemes
$128$ degrees of freedom are employed per space dimension. The order
of the discontinuous Galerkin (dG) method is indicated and the number
of cells are given in parenthesis.\label{fig:three-vortices}}

\par\end{centering}

\end{figure}

\end{document}